\numberwithin{equation}{section}
\def\M{\mathcal{M}}
\def\C{\mathcal{C}}
\def\ZZ{\mathbb{Z}}
\def\Pic{\operatorname{Pic}}
\def\oc2{\mathcal{O}_{\C_2}}
\newtheorem{thm}{Theorem}[section]
\newtheorem*{thm*}{Theorem}
\newtheorem{lem}[thm]{Lemma}
\newtheorem{prop}[thm]{Proposition}
\newtheorem{conj}[thm]{Expectation}
\theoremstyle{definition}
\newtheorem{df}{Definition}
\newtheorem{question}{Question}
\newcommand{\be}{\begin{equation}}
\newcommand{\ee}{\end{equation}}
\begin{document}

\title {Double covers of curves on Nikulin surfaces}

\dedicatory{Dedicated to Peter Newstead on the occasion of his $80^{th}$ birthday}

\author{Simona D'Evangelista}
\address{Simona D'Evangelista: DISIM, Universit\`a degli Studi dell'Aquila
\hfill \newline\texttt{}  \indent Via Vetoio I, Loc. Coppito, 67100 L'Aquila, Italy} \hfill \newline\texttt{}
 \email{{\tt simona\_devangelista@hotmail.it}}

\author{Margherita Lelli-Chiesa}
\address{Margherita Lelli-Chiesa: Dipartimento di Matematica e Fisica, Universit\`a Roma Tre
\hfill \newline\texttt{}  \indent Largo San Leonardo Murialdo 1, 00146 Roma, Italy} \hfill \newline\texttt{}
 \email{{\tt margherita.lellichiesa@uniroma3.it}}
\begin{abstract}
We survey basic results concerning Prym varieties, the Prym-Brill-Noether theory initiated by Welters, and Brill-Noether theory of general étale double covers of curves of genus $g\geq 2$. We then specialize to curves on Nikulin surfaces and show that étale double covers of curves on Nikulin surfaces of standard type do not satisfy Welters' Theorem. On the other hand, by specialization to curves on Nikulin surfaces of non-standard type, we prove that general double covers of curves ramified at $b=2,4,6$ points are Brill-Noether general; the case $b=2$ was already obtained by Bud \cite{bud} with different techniques.
\end{abstract}

\maketitle
\section{Introduction}
Double covers of complex curves are a classical and still very hot topic in algebraic geometry. Part of the interest in them stems from the fact that any étale double cover $\pi:\widetilde C\to C$ of a smooth irreducible curve $C$ of genus $g\geq 2$ naturally defines a principally polarized abelian variety $(P,\Xi)$ of dimension $g-1$, known as the Prym variety of $\pi$. Prym varieties were introduced by  Schottky and Jung \cite{SJ} in relation to the Schottky problem and were named after the German mathematician Prym by Mumford (\cite{Mumford}), who was the first to investigate them from an algebraic point of view. Despite a vast literature on the topic, many questions concerning the geometry of general double covers remain open, both in the étale and in the ramified case. We will here focus on Brill-Noether type questions. 

We recall that the Brill-Noether theory of a general curve $C$ of genus $g$ is quite well understood since the 1980s. The Brill-Noether Thorem establishes that the Brill-Noether variety $W^r_d(C)$, parametrizing degree $d$ line bundles on $C$ with a space of global sections of dimension $\geq r+1$, is nonempty if and only if the so-called Brill-Noether number $\rho(g,r,d):=g-(r+1)(g-d+r)$ is nonnegative. Furthermore, if nonempty, $W^r_d(C)$ is  smooth of dimension $\rho(g,r,d)$ outside of $W^{r+1}_d(C)$ by the Gieseker-Petri Theorem. The existence part of the Brill-Noether Theorem is actually valid for any smooth curve of genus $g$ and is due to Kleiman-Laksov \cite{KL} and Kempf \cite{Ke}. The first proofs of both the non-existence part of the Brill-Noether Theorem and of the Gieseker-Petri Theorem, all based on degeneration techniques, were achieved by Griffiths-Harris \cite{griffiths}, Eisenbud-Harris \cite{EH,Eisenbud}, and Gieseker \cite{Gi}. A big breakthrough came with Lazarsfeld's alternative proof of the Gieseker-Petri Theorem \cite{Lazarsfeld,Pareschi}, that avoided any type of degeneration and proceeded instead by specialization to smooth curves lying on $K3$ surfaces.

If $\pi:\widetilde C\to C$ is a double cover of a general genus $g\geq 2$ curve $C$, the curve $\widetilde C$ is obviously non-general  in moduli and it thus makes sense to investigate its Brill-Noether theory. In the étale case $\widetilde C$ has genus $2g-1$ and is never Petri general because $W^1_{2g-2}(\widetilde C)$ turns out to be singular (cf. \cite{Welters} and Section \ref{inv} below). Moreover, if $g$ is even, $\widetilde C$ possesses a pencil of degree $g$ which prevents it from being Brill-Noether general (cf. \cite{generalcovers} and Section \ref{tre} later in this paper). Hence, the Brill-Noether behaviour of $\widetilde C$ seems quite wild. The picture is much nicer if one considers instead the so-called Prym-Brill-Noether theory of $\widetilde C$. This is the study of the geometry of the Prym-Brill-Noether varieties $V^r$, that were introduced by Welters \cite{Welters} and are related to the geometry of the Prym variety $P$; indeed, they are obtained by intersecting the Brill-Noether varieties $W^r_{2g-2}(\widetilde C)$ with an appropriate translate of $P$ living in $\Pic^{2g-2}(\widetilde C)$. Using the degeneration techniques developed by Eisenbud and Harris, Welters proved (cf. Thm. \ref{Prym} below) that for a general $\pi$ the varieties $V^r$ are smooth of dimension $\rho^-(g,r):=g-1- (r+1)r/2$. In particular, they are empty if $\rho^-(g,r)<0$; conversely, $V^r$ is nonempty if $\rho^-(g,r)\geq 0$ for any étale double cover $\pi$ by a result of Bertram \cite{Bertram}. These statements are the analogues in Prym-Brill-Noether theory of the Brill-Noether Theorem and the Gieseker-Petri Theorem.

Perhaps discouraged by the intricate picture in the étale case, people ignored the Brill-Noether theory of $\widetilde C$ in the ramified case until recently, where Bud \cite{bud} proved that $\widetilde C$ is both Brill-Noether and Petri general (that is, it satisfies both the the Brill-Noether Theorem and the Gieseker-Petri Theorem) if $\pi:\widetilde C\to C$ is a double cover of a general genus $g$ curve branched over $2$ general points of $C$. This result is quite amazing as it highlights that ramified double covers behave better then étale ones from a Brill-Noether viewpoint. 

This paper was born from the desire to make $K3$ surfaces and Lazarsfeld's techniques enter the picture. This is ensured by specialization to Nikulin surfaces, that is, primitively polarized $K3$ surfaces $(S,H)$ endowed with a double cover $\pi_S:\widehat S\to S$ ramified along eight ($-2$)-curves, which are both pairwise disjoint and disjoint from smooth curves in the linear system $|H|$. The minimal model of $\widehat S$ is again a $K3$ surface $\widetilde S$ endowed with an involution having $8$ fixed points. The Picard ranks of both $S$ and $\widetilde S$ are $\geq 9$ and, when equality holds, the Picard groups have been described in \cite{VGS, GS}. It turns out that there are two types of Nikulin surfaces, depending on whether the embedding of the rank $9$ sublattice generated by $H$ and the ($-2$)-curves in $\Pic(S)$ is primitive or not: according to the terminology introduced in \cite{KLV1},  $(S,H)$ is called of standard type in the former case, and of non-standard type in the latter. Note that $\pi$ induces an étale double cover $\pi|_C:\widetilde{C}:=\pi^{-1}(C)\to C$ of any smooth curve $C\in|H|$, and the curve $\widetilde C$ can be identified with its image in $\widetilde S$. In the last decade Nikulin surfaces have been largely exploited in the study of étale double covers of curves (or equivalently, Prym curves) and their moduli space $\mathcal R_h$. In particular, Farkas and Verra \cite{FV} proved that for $6 \neq h\leq 7$, general étale double covers of genus $h$ curves live on Nikulin surfaces of standard type, and used this fact to describe the birational geometry of $\mathcal R_h$ in low genera. Furthermore, in the standard case the curve $\widetilde C\subset S$ has the gonality of a general étale double cover of a genus $h$ curve \cite{generalcovers}. It is thus natural to wonder whether a different proof of Welters' Theorem can be obtained by specialization to étale double covers  living on Nikulin surfaces of standard type. Unfortunately, we will answer negatively to this question proving the following theorem.
\begin{thm*}
Let $\pi:\widetilde C\to C$ be an étale double cover on a  Nikulin surface $S$ of standard type, with $C$ in the primitive linear system $|H|$ of genus $h$. If $h>7$ or $h=6$, the curve $\widetilde C$ does not satisfy Welters' Theorem.
\end{thm*}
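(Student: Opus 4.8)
The plan is to exhibit, for the étale double cover $\pi:\widetilde C\to C$ living on a Nikulin surface $S$ of standard type with $C\in|H|$ of genus $h>7$ or $h=6$, a Prym-Brill-Noether datum that violates the conclusion of Welters' Theorem (Theorem \ref{Prym}): namely either a variety $V^r$ of dimension strictly larger than the expected $\rho^-(h,r)=h-1-\binom{r+1}{2}$, or a nonempty $V^r$ with $\rho^-(h,r)<0$, or a point of $V^r$ where $V^r$ fails to be smooth of the expected dimension. The natural source of such a datum is the ambient surface geometry: the curve $\widetilde C$ sits in the $K3$ surface $\widetilde S$, and line bundles on $\widetilde C$ arising by restriction from $\widetilde S$ — or from the Nikulin quotient data — tend to move in families that are too large, exactly as in Lazarsfeld's circle of ideas. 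Concretely, in the standard case the sublattice $\langle H, N_1,\dots\rangle$ (the $(-2)$-curves together with $H$) is primitively embedded with prescribed discriminant, so on $\widetilde S$ there is a distinguished class — the pullback of the Nikulin class $e$ with $2e\sim N_1+\cdots+N_8$ — whose restriction to $\widetilde C$ lands in the Prym and carries unexpectedly many sections.

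The steps, in order, would be: (1) Recall the lattice-theoretic description of $\Pic(\widetilde S)$ in the standard case from \cite{VGS, GS, KLV1}, identify the distinguished class $\eta\in\Pic(\widetilde S)$ pulled back from the Nikulin involution, and compute $\eta\cdot\widetilde C$ and $\eta^2$. (2) Show that $\mcO_{\widetilde C}(\eta)$ (or a suitable twist by a component of the ramification/branch divisor, or by a multiple of the hyperplane class) is a Prym line bundle, i.e. lies in the appropriate translate of $P$ inside $\Pic^{2h-2}(\widetilde C)$ that defines the $V^r$; this is a Nm-computation using that $\pi$ is induced by the Nikulin double cover. (3) Bound $h^0(\widetilde C,\mcO_{\widetilde C}(\eta))$ from below, either by restricting sections from $\widetilde S$ via the exact sequence $0\to \mcO_{\widetilde S}(\eta-\widetilde C)\to\mcO_{\widetilde S}(\eta)\to\mcO_{\widetilde C}(\eta)\to 0$ and Kodaira vanishing / Ramanujam-type vanishing on the $K3$, or by exhibiting an explicit pencil (e.g. an elliptic pencil on $\widetilde S$ cut on $\widetilde C$). (4) Compute the resulting value of $r$ and compare $\dim$ of the component of $V^r$ through $[\mcO_{\widetilde C}(\eta)]$ — which is at least the dimension of the family of such restricted bundles, governed by the rank of $\Pic(\widetilde S)$ and by Serre duality on $\widetilde S$ — with $\rho^-(h,r)$, checking that the inequality goes the wrong way precisely when $h>7$ or $h=6$. (5) Alternatively, and probably more robustly, deduce failure of \emph{smoothness}: produce a point of $V^{r}$ that also lies on $V^{r+1}$, or where the Welters/Petri-type multiplication map relevant to the tangent space to $V^r$ drops rank, which by the description of $T V^r$ (analogous to the Brill-Noether tangent space, via the Petri map restricted to the $(-1)$-eigenspace) again follows from the surface containing $\widetilde C$ — this is the Prym analogue of Lazarsfeld's observation that curves on $K3$'s are never Petri-general when Picard rank is large.

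The main obstacle I expect is Step (3)–(4): one must control $h^0$ on the nose, not just up to a bound, and simultaneously control the dimension of the relevant component of $V^r$, because violating Welters' Theorem requires a \emph{strict} discrepancy, and the numerology only tips over for $h=6$ and $h\ge 8$ (the case $h=7$ is genuinely different — there general étale covers do live on standard Nikulin surfaces by \cite{FV} and presumably do satisfy Welters, so the argument must be sharp enough to exclude $h=7$). Handling this will likely require the precise intersection form on the rank-$9$ Nikulin lattice and a careful vanishing argument on $\widetilde S$ (using that $\widetilde S$ is a $K3$ and that the distinguished class is nef or has a prescribed base locus along the fixed points of the involution), together with the identification of the Prym via $\mathrm{Nm}$ so that the constructed bundle genuinely contributes to $V^r$ rather than to $W^r_{2h-2}(\widetilde C)\setminus V^r$. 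A secondary subtlety is the comparison with Bertram's nonemptiness result \cite{Bertram} and Welters' dimension count, to make sure the violation is attributed correctly (excess dimension vs. singularity vs. nonemptiness below the expected range).
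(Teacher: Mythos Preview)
Your strategic outline is on target: the paper does exactly what you sketch in Steps (1)--(4), namely restrict a distinguished class from $\widetilde S$ to $\widetilde C$, verify it lands in $\mathrm{Nm}^{-1}(\omega_C)$, bound $h^0$ from below via Riemann--Roch on the $K3$, and compare with $\rho^-(h,r)$. But your identification of the class is wrong, and this is the one genuinely nontrivial input. The pullback of the Nikulin class $M$ (your $e$) is useless: $M|_C=\eta$ is the $2$-torsion class defining the cover, so $\pi^*(M|_C)\simeq\mcO_{\widetilde C}$ is trivial. The correct class is not pulled back from $S$ at all. By Proposition~\ref{standard} (Van Geemen--Sarti), $\Pic(\widetilde S)$ contains $\widetilde\Lambda_h=\ZZ\widetilde H\oplus E_8(-2)$ with index $2$, and the extra generator is $A=\tfrac{1}{2}(\widetilde H+v)$ for some $v\in E_8(-2)$ with $v^2=-4$ ($h$ even) or $v^2=-8$ ($h$ odd). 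This $A$ is the class you need.

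Once you have $A$, everything is much simpler than you anticipate. Since $E_8(-2)=(H^2(\widetilde S,\ZZ)^\iota)^\perp$, the involution satisfies $\iota^*\widetilde H=\widetilde H$ and $\iota^*v=-v$, so $A+\iota^*A=\widetilde H$ and hence $A|_{\widetilde C}\otimes\iota^*A|_{\widetilde C}\simeq\omega_{\widetilde C}$, i.e.\ $\mathrm{Nm}(A|_{\widetilde C})=\omega_C$. Because $v\cdot\widetilde H=0$, one has $\deg A|_{\widetilde C}=2h-2$. The restriction sequence and Saint-Donat's vanishing give $h^0(\widetilde C,A|_{\widetilde C})=h^0(\widetilde S,A)\ge\chi(A)=2+\tfrac12A^2=\lfloor h/2\rfloor+1$, so $r\ge\lfloor h/2\rfloor$. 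Plugging this in yields $\rho^-(h,r)=-\tfrac{(h-1)(h-7)}{8}$ for $h$ odd and $-\tfrac{(h-2)(h-4)}{8}$ for $h$ even, negative precisely when $h>7$ or $h=6$. A nonempty $V^r$ with $\rho^-<0$ already forces $\ker\mu_{0,L}^-\neq 0$, so you are done. Your worry about needing $h^0$ ``on the nose'' and about comparing component dimensions is misplaced: a lower bound on $r$ suffices because $\rho^-(h,r)$ is decreasing in $r$, and nonemptiness with negative $\rho^-$ is itself the violation --- there is no need for Step~(5) or for any smoothness analysis.
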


Nikulin surfaces of non-standard type have not received as much attention, and their investigation essentially started in \cite{KLV1,KLV2}. As emerged in \cite{KLV1}, in the non-standard case the curve $\widetilde C$ is quite special as it possesses two theta characteristics with many sections cut out by two line bundles $R_1,R_2\in \Pic(\widetilde S)$. The main result of the paper suggests that Nikulin surfaces of non-standard type are instead the right environment to investigate double covers of curves ramified at $2,4,6$ points, which can be realized on a Nikulin surface of non-standard type by restricting $\pi_S$ to the inverse image of smooth curves in the linear systems $|R_1|$ and $|R_2|$. By specialization to ramified double covers of curves on Nikulin surfaces of non-standard type, we obtain the following generalization of Bud's result.
\begin{thm*}
Let $\pi:\widetilde C\to C$ be a general double cover of a genus $g\geq 2$ curve ramified at $2$, $4$, or $6$ points. Then the curve $\widetilde C$ is Brill-Noether general.
\end{thm*}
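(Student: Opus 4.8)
\emph{Step 1: reduction to a single cover.} The plan is to reduce the statement to one explicitly constructed double cover and then run a Lazarsfeld--Mukai bundle argument on a $K3$ surface, with the Picard lattice of a general $K3$ replaced by that of a Nikulin surface of non-standard type. The moduli space $\mathcal{R}_{g,b}$ of double covers of smooth genus $g$ curves branched at $b$ points is irreducible, and the locus of $[\pi\colon\widetilde C\to C]\in\mathcal{R}_{g,b}$ with $\widetilde C$ Brill--Noether general is open, being the preimage under the forgetful morphism $[\pi]\mapsto[\widetilde C]$ of the open Brill--Noether general locus of $\mathcal{M}_{\widetilde g}$, where $\widetilde g=2g-1+b/2$. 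Hence it is enough to exhibit one cover whose total space is Brill--Noether general; in particular no dimension count comparing the Nikulin locus with $\mathcal{R}_{g,b}$ is needed. We build the example on a $K3$ surface: let $\widetilde S$ be a general Nikulin surface of non-standard type, so that $\Pic(\widetilde S)$ is the explicit rank $9$ lattice of \cite{VGS,GS,KLV1}; let $\iota$ be the Nikulin involution, with eight fixed points; and let $\widetilde C\subset\widetilde S$ be general in the appropriate $\iota$-invariant linear system --- one of those attached to $R_1,R_2$ in the discussion preceding the statement --- whose general member is smooth, irreducible, and passes through exactly $b$ of the eight fixed points of $\iota$. Then $\widetilde C\to C:=\widetilde C/\iota$ is a double cover of a smooth genus $g$ curve branched at $b$ points, $\widetilde C$ has genus $\widetilde g$ with $\widetilde C^{2}=2\widetilde g-2$, and $\widetilde C$ is big and nef on $\widetilde S$, being a reduced irreducible curve of positive self-intersection. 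It remains to prove that this $\widetilde C$ is Brill--Noether general.

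\emph{Step 2: the Lazarsfeld--Mukai bundle.} Assume, for contradiction, that $W^{r}_{d}(\widetilde C)\neq\emptyset$ for some $r,d$ with $\rho:=\rho(\widetilde g,r,d)<0$. Passing to the base-point-free part (which only decreases $\rho$) and to a complete series, we may take $A\in W^{r}_{d}(\widetilde C)$ globally generated with $h^{0}(A)=r+1$; moreover $\rho<0$ forces $h^{1}(A)>0$, since otherwise Riemann--Roch gives $\rho\geq\widetilde g\geq 0$. Let $E=E_{\widetilde C,A}$ be the Lazarsfeld--Mukai bundle on $\widetilde S$, the dual of the kernel of the evaluation map $H^{0}(\widetilde C,A)\otimes\mathcal{O}_{\widetilde S}\twoheadrightarrow A$ (with $A$ regarded as a torsion sheaf on $\widetilde S$): it is globally generated with $\rk E=r+1$, $\det E=\mathcal{O}_{\widetilde S}(\widetilde C)$, $c_{2}(E)=d$, $h^{1}(E)=h^{2}(E)=0$ and $h^{0}(E)=h^{0}(A)+h^{1}(A)$. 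Computing its Mukai vector gives $v(E)^{2}=2\rho-2\leq-4$, so $E$ is not simple (a simple sheaf on a $K3$ surface satisfies $v^{2}\geq-2$). Since $E$ is globally generated with big and nef determinant, Lazarsfeld's analysis of non-simple Lazarsfeld--Mukai bundles \cite{Lazarsfeld,Pareschi} yields --- after passing, when $\rk E>2$, to a line sub-bundle occurring in a suitable filtration --- a short exact sequence
\[
0\longrightarrow M\longrightarrow E\longrightarrow N\otimes\mathcal{I}_{Z}\longrightarrow 0,
\]
with $M,N\in\Pic(\widetilde S)$ effective, $h^{0}(M)\geq 2$, $h^{0}(N)\geq 2$, $M+N\equiv\widetilde C$, $M\cdot N\geq 0$ and $M\cdot N+\ell(Z)=d$, together with the numerical constraints linking $r,d,\widetilde g$ to $M^{2}$, $N^{2}$, $M\cdot N$ that come from the cohomology of this sequence.

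\emph{Step 3: the lattice analysis, and the main obstacle.} On a general $K3$ with $\Pic=\ZZ\cdot[\widetilde C]$ the relation $M+N\equiv\widetilde C$ with $M,N$ effective and nonzero is already impossible, and Lazarsfeld's argument ends there. On $\widetilde S$ such decompositions are plentiful, so the core of the proof is to enumerate, using the explicit rank $9$ lattice $\Pic(\widetilde S)$ and the known class of $\widetilde C$, all effective decompositions $[\widetilde C]=M+N$ with $h^{0}(M),h^{0}(N)\geq 2$, and to show that each is incompatible with $\rho<0$ --- equivalently, that $\rho(\widetilde g,r,d)\geq 0$ whenever such an $E$ exists. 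This will use the Hodge index theorem on $\widetilde S$, the standard bounds for $h^{0}$ of effective line bundles on a $K3$ surface, and the fact that for the \emph{general} Nikulin surface of non-standard type the classes of $(-2)$-curves and of elliptic pencils in $\Pic(\widetilde S)$ are exactly the ones prescribed by the lattice, which disposes of the degenerate configurations; the few decompositions that survive are those built from $R_{1}$ and $R_{2}$, whose restrictions to $\widetilde C$ are precisely the two distinguished theta characteristics mentioned before the statement, and for these one verifies by hand that the relevant Brill--Noether numbers are nonnegative. This case analysis --- which carries essentially all of the difficulty and which lengthens as $b$ increases, since larger $b$ forces $[\widetilde C]$ to involve more of the Nikulin part of $\Pic(\widetilde S)$ --- is the main obstacle; for $b=2$ it reproves Bud's theorem \cite{bud}. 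The remaining part of Brill--Noether generality, that $\dim W^{r}_{d}(\widetilde C)=\rho$ when $\rho\geq 0$, follows by the same mechanism: an excess component of $W^{r}_{d}(\widetilde C)$ would again produce a non-simple Lazarsfeld--Mukai bundle moving in a family of dimension larger than $v(E)^{2}+2$, forcing a decomposition excluded by the lattice analysis.
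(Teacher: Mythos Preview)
Your overall strategy---specialize to a ramified double cover living on a non-standard Nikulin surface and apply Lazarsfeld's criterion on the covering $K3$---is exactly the paper's approach, and Step~1 is fine. The problem is Step~3, which rests on a misreading of $\Pic(\widetilde S)$ and of which curve is under consideration.

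The point you are missing is Proposition~\ref{non-standard} (Van Geemen--Sarti): for a non-standard Nikulin surface of Picard number $9$ one has
\[
\Pic(\widetilde S)=\ZZ\,\widetilde R\ \oplus\ E_8(-2),
\]
and the ramified double cover $\widetilde D$ of a smooth $D\in|R_1|$ or $|R_2|$ lies in the \emph{primitive} class $|\widetilde R|$, orthogonal to the negative-definite summand $E_8(-2)$. This holds uniformly for $b=2,4,6$; the class of the covering curve does \emph{not} ``involve more of the Nikulin part of $\Pic(\widetilde S)$'' as $b$ grows. Given this, Lazarsfeld's hypothesis is immediate: if $\widetilde R\sim L_1+L_2$ with $h^0(L_i)\geq 2$, write $L_1=a\widetilde R+v$ with $v\in E_8(-2)$; then $a\in\{0,1\}$ forces one of $L_1,L_2$ to lie in $E_8(-2)$, hence to have negative self-intersection and $h^0\leq 1$. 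So there is \emph{no} case analysis, no ``plentiful'' decompositions, and nothing that lengthens with $b$; the paper dispatches this in one sentence (Proposition~\ref{utile}(ii)).

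Your concrete claims in Step~3 are accordingly off: the line bundles $R_1,R_2$ live in $\Pic(S)$, not $\Pic(\widetilde S)$, and the theta-characteristics they cut out are on curves $C\in|H|$ (the \'etale situation, which is Brill--Noether \emph{special}), not on the ramified cover $\widetilde D\in|\widetilde R|$. There are no ``surviving decompositions built from $R_1,R_2$'' on $\widetilde S$ to check by hand. Once you input the correct description of $\Pic(\widetilde S)$, your Steps~1--2 together with Lazarsfeld's theorem give the result directly.
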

The paper is organized as follows. Section \ref{uno} surveys the basic theory of Prym varieties and Prym-Brill-Noether theory, with particular attention to Welters' infinitesimal study of $V^r$ in terms of the Prym-Petri map, which is the anti-invariant part $\mu_{0,L}^-$ of the Petri map $\mu_{0,L}$ of any line bundle $L\in V^r$. The sections ends with a discussion and interpretation of its invariant counterpart $\mu_{0,L}^+$, as well as with a conjectural picture concerning its injectivity. Section \ref{due} recalls some prerequisites on theta characteristics in order to show, following an argument by Beauville \cite{Beauville}, that for any irreducible étale double cover $\pi:\widetilde C\to C$ the curve $\widetilde C$ possesses some invariant vanishing thetanulls: in particular, $\widetilde C$ is not Petri general. In Section \ref{tre} we concentrate on the Brill-Noether theory of $\widetilde C$ when $\pi$ is general. More precisely, we recall Schwarz's non-existence result \cite{Schwarz} concerning linear series whose Brill-Noether number is negative enough, and Aprodu-Farkas' theorem \cite{generalcovers} on the gonality of $\widetilde C$. The latter prevents $\widetilde C$ from being Brill-Noether general if the genus of $C$ is even, while the Brill-Noether generality/speciality in the odd genus case is still unknown in full generality, up to our knowledge. We also recall Bud's result concerning the ramified case. Section \ref{nik} is focused on Nikulin surfaces of standard and non-standard type and on the proof of the above theorems.

\section{Prym-Brill-Noether theory}\label{uno}
\subsection{Preliminaries on Prym varieties}
Prym varieties are principally polarized abelian varieties arising from \'{e}tale double covers of curves, and they are useful to link the geometry of curves to that of abelian varieties.

Let $C$ be a complex smooth irreducible curve, and let $\pi:\widetilde{C}\rightarrow C$ be an irreducible \'{e}tale double cover of $C$. Denoting by $g$ and $\widetilde{g}$ the genus of $C$ and $\widetilde{C}$, respectively, Hurwitz's formula yields $\widetilde{g}=2g-1$. The cover $\pi$ induces the so-called {\em Norm map} between the Jacobians $J(C)$ and $J(\widetilde{C})$ of $C$ and $\widetilde{C}$, respectively:

\begin{align*}
\mathrm{Nm} \colon J(\widetilde{C})& \longrightarrow J(C) \\
\mathcal O_{\widetilde C}(D) &\longmapsto  \mathcal O_{C}(\pi(D)).
\end{align*}

This fits in the following commutative diagram:
 $$\xymatrix{ \widetilde{C} \ar[d]_{\pi} \ar[r]^{a_{y_0}}
    & J(\widetilde{C})\ar[d]^{\mathrm{Nm}}\\
     C \ar[r]_{a_{x_0}} & J(C),}$$
where  $a_{x_0}$ and $a_{y_0}$ are the Abel-Jacobi maps associated with some fixed points $x_0 \in C$ and $y_0 \in \widetilde{C}$ such that $\pi(y_0)=x_0$.

In particular, the principally polarized Abelian varieties $(J(C),\Theta)$ and $(J(\widetilde{C}),\widetilde{\Theta})$ are related by two maps
$$ \pi^*:J(C)\rightarrow J(\widetilde{C}), \quad \mathrm{Nm}:J(\widetilde{C})\rightarrow J(C),$$
such that $\pi^*$ and $\mathrm{Nm}$ are dual to each other and
$\mathrm{Nm}\circ \pi^*:J(C)\rightarrow J(C) $ is multiplication by two (\cite{Mumford}). Indeed, if $N=\mathcal O_C(D)\in J(C)$, then $\pi^*N =\mathcal O_{\widetilde C}(\pi^{-1}(D))\in J(\widetilde C)$ and thus $\mathrm{Nm}(\pi^*N)=\mathcal O_C(\pi(\pi^{-1}(D)))\in J(C)$; since $\pi$ is a double cover, we get $\pi(\pi^{-1}(D))=2D$. \\
Denote by $\iota:\widetilde{C}\rightarrow \widetilde{C}$ the involution that interchanges the sheets of $\pi$. For every divisor $\widetilde D$ on $\widetilde{C}$, the following equality is straightforward:
$$\pi^{-1}(\pi(\widetilde D))=\widetilde D+\iota(\widetilde D).$$
It follows that 
$$\pi^*(\mathrm{Nm}(M))=M\otimes\iota^*(M), \quad \forall \quad M\in J(\widetilde{C}).$$ 
Since $\mathrm{Nm}$ is surjective, we also get that
$$\iota^*(\pi^*(N))=\pi^*N, \quad \forall \quad N \in J(C),$$
that is, the involution $\iota^*$ acts as the identity on $\pi^*(J(C))\subset J(\widetilde C)$. On the other hand, $\iota^*$ acts as $-1$ on $\mathrm{Ker}(\mathrm{Nm})\subset J(\widetilde C)$. 
In \cite{Vr1} Mumford decomposed the kernel of $\mathrm{Nm}$ into two irreducible components:
$$P_0:=\{M\otimes \iota^*M^\vee\,\,|\,\, M\in \Pic^0(\widetilde C)\},$$
$$P_1:=\{M\otimes \iota^*M^\vee\,\,|\,\, M\in \Pic^1(\widetilde C)\}.$$
More precisely, he proved the following:
\begin{lem}[\cite{Vr1} Lem. 1]
If $L$ is a line bundle on $\widetilde C$ such that $\mathrm{Nm}\,L\simeq \mathcal{O}_C$, then $L\simeq M \otimes \iota^*M^\vee$ for some line bundle $M$ on $\widetilde C$. Moreover, $M$ can be chosen of degree $0$ or $1$.
\end{lem}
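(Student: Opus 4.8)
The plan is to identify $\mathrm{Ker}(\mathrm{Nm})$ with the union $P_0\cup P_1$ displayed above, by first locating its identity component and then controlling its component group. Introduce the homomorphism $\nu\colon J(\widetilde C)\to J(\widetilde C)$, $M\mapsto M\otimes\iota^*M^\vee$. Since pushing a divisor forward under the finite morphism $\pi$ preserves degrees, $\deg(\mathrm{Nm}\,L)=\deg L$; hence $\mathrm{Nm}\,L\cong\mathcal O_C$ forces $\deg L=0$, so $L\in\mathrm{Ker}(\mathrm{Nm})\subset J(\widetilde C)$. From $\pi\circ\iota=\pi$ one gets $\mathrm{Nm}\circ\iota^*=\mathrm{Nm}$, hence $\mathrm{Nm}\circ\nu=0$, so $P_0=\operatorname{Im}(\nu)=\{M\otimes\iota^*M^\vee\mid M\in\Pic^0(\widetilde C)\}$ is a connected abelian subvariety of $\mathrm{Ker}(\mathrm{Nm})$.

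First I would show that $P_0$ is the \emph{whole} identity component, by comparing dimensions. The differential of $\nu$ at the origin is $\mathrm{id}-\iota^*$ acting on $H^1(\widetilde C,\mathcal O_{\widetilde C})$, so $\dim P_0$ equals the dimension of the $(-1)$-eigenspace of $\iota^*$ there; since the $(+1)$-eigenspace is $H^1(C,\mathcal O_C)$, of dimension $g$, the $(-1)$-eigenspace has dimension $\widetilde g-g=g-1$. Thus $\dim P_0=g-1=\dim\mathrm{Ker}(\mathrm{Nm})$, and as a connected closed subvariety of the full dimension of $\mathrm{Ker}(\mathrm{Nm})$ must be its identity component, $P_0=\mathrm{Ker}(\mathrm{Nm})^0$. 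This already proves the statement, with $\deg M=0$, whenever $L$ lies in the identity component.

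Next I would produce the other coset. For a point $\widetilde p\in\widetilde C$ the line bundle $t:=\mathcal O_{\widetilde C}(\widetilde p-\iota\widetilde p)$ lies in $\mathrm{Ker}(\mathrm{Nm})$, and writing a degree-$1$ line bundle as $N\otimes\mathcal O(\widetilde p)$ with $N\in\Pic^0$ gives $(N\otimes\mathcal O(\widetilde p))\otimes\iota^*(N\otimes\mathcal O(\widetilde p))^\vee=\nu(N)\otimes t$, so $P_1=\{M\otimes\iota^*M^\vee\mid M\in\Pic^1(\widetilde C)\}=P_0+t$. Moreover $t\notin P_0$: were $t=\nu(N)$ with $N\in\Pic^0$, then $N\otimes\mathcal O(-\widetilde p)$ would be an $\iota^*$-invariant line bundle of odd degree, which is impossible, because an $\iota^*$-invariant line bundle on $\widetilde C$ admits a $\ZZ/2$-linearization (the obstruction lying in $H^2(\ZZ/2,\CC^*)=0$) and, the involution being fixed-point free, therefore descends to $C$ and so has even degree. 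Hence $P_0$ and $P_1=P_0+t$ are two disjoint cosets of $P_0$ inside $\mathrm{Ker}(\mathrm{Nm})$.

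The heart of the matter, and the step I expect to be the main obstacle, is to show that these are the \emph{only} two cosets, i.e.\ that $\pi_0(\mathrm{Ker}(\mathrm{Nm}))$ has order exactly $2$; a soft dimension argument does not suffice. One sees quickly that this group is elementary abelian of exponent $2$: for $L\in\mathrm{Ker}(\mathrm{Nm})$ one has $\iota^*L\cong L^\vee$, whence $L^{\otimes 2}\cong\nu(L)\in P_0=\mathrm{Ker}(\mathrm{Nm})^0$. To compute its order I would invoke the principal polarizations. Write $\mathrm{Nm}=\overline{\mathrm{Nm}}\circ q$, where $q\colon J(\widetilde C)\to J(\widetilde C)/P_0$ is the quotient; then $\overline{\mathrm{Nm}}$ is an isogeny (both sides having dimension $g$) with $\mathrm{Ker}(\overline{\mathrm{Nm}})=\pi_0(\mathrm{Ker}(\mathrm{Nm}))$. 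Dualizing this factorization and using that $\pi^*$ and $\mathrm{Nm}$ are dual with respect to $\Theta$ and $\widetilde\Theta$, together with the fact that $\widehat q$ is a closed immersion (being dual to a surjection with connected kernel), one obtains $\mathrm{Ker}(\pi^*)=\mathrm{Ker}(\widehat{\overline{\mathrm{Nm}}})$; since $\widehat{\overline{\mathrm{Nm}}}$ is again an isogeny, $\deg\overline{\mathrm{Nm}}=\deg\widehat{\overline{\mathrm{Nm}}}=|\mathrm{Ker}(\pi^*)|=2$. As $t\in\mathrm{Ker}(\mathrm{Nm})\setminus P_0$, the non-identity component is exactly $P_0+t=P_1$, so $\mathrm{Ker}(\mathrm{Nm})=P_0\sqcup P_1$; consequently any $L$ with $\mathrm{Nm}\,L\cong\mathcal O_C$ equals $M\otimes\iota^*M^\vee$ with $\deg M=0$ (if $L\in P_0$) or $\deg M=1$ (if $L\in P_1$), as claimed.
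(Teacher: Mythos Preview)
The paper does not supply its own proof of this lemma; it merely quotes the result from Mumford and moves on. So there is no in-paper argument to compare against, and your proposal should be judged on its own merits.

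Your argument is correct. The identification of $P_0$ with the identity component of $\ker(\mathrm{Nm})$ via the differential of $\nu=1-\iota^*$ is clean, and the descent argument showing $t\notin P_0$ (an $\iota^*$-invariant line bundle on $\widetilde C$ linearizes because $H^2(\ZZ/2,\CC^*)=0$, then descends since $\iota$ is free, hence has even degree) is the right obstruction. The crucial step, bounding $|\pi_0(\ker\mathrm{Nm})|$, is handled correctly: factoring $\mathrm{Nm}$ through the quotient by $P_0$, dualizing, and using that $\pi^*$ is dual to $\mathrm{Nm}$ with $\ker(\pi^*)=\{\mathcal O_C,\eta\}$ gives $\deg\overline{\mathrm{Nm}}=2$. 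One small remark: you might make explicit that $\ker(\pi^*)$ has order exactly $2$ (and not more) because $\widetilde C$ is assumed irreducible, so the only line bundles trivialized by $\pi^*$ are $\mathcal O_C$ and $\eta$; the paper records this hypothesis but it is worth flagging where it enters your proof.
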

A classical theorem by Wirtinger (\cite{Wirtinger}) yields that the dimension of the space of global sections is constant mod 2 on $P_0$ and $P_1$ and it has opposite parity on the two components.
The component $P:=P_0$ of $\mathrm{Ker}(\mathrm{Nm})$ containing the origin is an abelian subvariety of $J(\widetilde C)$ of dimension $$\mathrm{dim}\,P=\mathrm{dim}\,J(\widetilde{C})-\mathrm{dim}\,J(C)=g-1;$$ furthermore, it turns out that the canonical pola\-ri\-zation of $J(\widetilde{C})$ restricts to twice a principal polarization $\Xi$ on $P$. The principally polarized abelian variety $(P,\Xi)$ is called the {\em Prym variety} associated with $\pi$ (cf. \cite[\S 3]{Mumford} for more details).  
        
By the same argument \cite{Mumford}, the inverse image $\mathrm{Nm}^{-1}(\omega_C)\subset \mathrm{Pic}^{2g-2}(\widetilde{C})$ of the canonical line bundle $\omega_C \in \mathrm{Pic}^{2g-2}(C)$ breaks up in two components
$$P^{+}:=\{L\in \mathrm{Pic}^{2g-2}(\widetilde {C}) \ | \ \mathrm{Nm}(L)\simeq \omega_C,\,\, h^0(C,L)\equiv 0\,\, \mbox{(mod}\ 2)\},$$
$$P^{-}:=\{L\in \mathrm{Pic}^{2g-2}(\widetilde {C}) \ | \ \mathrm{Nm}(L)\simeq \omega_C,\,\, h^0(C,L)\equiv 1\,\,\mbox{(mod} \ 2)\},$$
 which are both translates of $P$.
 
 We recall that the cover $\pi:\widetilde C \rightarrow C$ defines a class of order two $\eta \in J(C)[2]$ such that $\pi_*\mathcal O_{\widetilde C}=\mathcal O_C\oplus\eta$; viceversa, any non-trivial $\eta \in J(C)[2]$ determines an étale double cover of $C$ by setting $\widetilde C:=\mathrm{Spec}(\mathcal O_C\oplus\eta)$. Therefore, the datum of $\pi$ is equivalent to that of the pair $(C,\eta)$, which is called a {\em Prym curve} of genus $g$. The moduli space 
 $$
 \mathcal R_g:=\left\{(C,\eta)\,|\, C\textrm{ smooth curve of genus }g,\,\,\eta\in J(C)[2]\,,\,\eta\not\simeq\mathcal O_C    \right\},
 $$
 has attracted much attention in the last decades; we refer to \cite{gabi} for a very nice survey on its geometry.

\subsection{Prym-Brill-Noether varieties and Welters' Theorem} 

In \cite{Welters} Welters initiated a Brill-Noether study of the varieties $P^-$ and $P^+$ and introduced the following closed subsets of $\mathrm{Nm}^{-1}(\omega_C)$, defined for any integer $ r\geq -1 $:
\begin{multline*}V^r(C,\eta):=\{L \in \mathrm{Pic}^{2g-2}(\widetilde C) \ | \  \mathrm{Nm}(L)\simeq \omega_C,\, h^0(C,L)\geq r+1,\, h^0(C,L)\equiv r+1\,\, \mbox{(mod}\ 2)\}.\end{multline*}
Scheme-theoretically, these can be realized as the intersections
\begin{align}\label{inter}V^r(C,\eta)=W^r_{2g-2}(\widetilde C)\cap P^{+} \quad \mbox{if} \ r \ \mbox{is odd},\\\nonumber
V^r(C,\eta)=W^r_{2g-2}(\widetilde C)\cap P^{-} \quad \mbox{if} \ r \ \mbox{is even},
\end{align}
where $$W^r_{2g-2}(\widetilde C)=\left\{ L \in \mathrm{Pic}^{2g-2}(\widetilde C) \ | \  h^0(C,L)\geq r+1  \right\}$$ is the classical Brill-Noether variety; this justifies the name  {\em Prym-Brill-Noether varieties} used for the loci $V^r(C,\eta)$. The following result proved in \cite{Vr1} and \cite{Vr2} concerns their expected dimension.
\begin{prop}
Let $L \in \mathrm{Nm}^{-1}(\omega_C)$ and set $h^0(L)=r+1$. Then, the dimension of $V^r(C,\eta)$ at $L$ satisfies
$$\mathrm{dim}_L (V^r(C,\eta)) \geq g-1- \binom{r+1}{2}.$$
\end{prop}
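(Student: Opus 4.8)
The plan is to realize $V^r(C,\eta)$ locally as the zero locus of a section of a vector bundle, or equivalently as a degeneracy locus, and then invoke the general lower bound on the dimension of such loci. Concretely, I would start by recalling the standard local model for the classical Brill-Noether variety $W^r_{2g-2}(\widetilde C)$: near a line bundle $L$ with $h^0(L)=r+1$, the scheme $W^r_{2g-2}(\widetilde C)$ is cut out inside $\mathrm{Pic}^{2g-2}(\widetilde C)$ by the $(r+1)\times(r+1)$ minors of a matrix of regular functions, coming from a presentation of the pushforward of a Poincar\'e bundle; its expected codimension is $(r+1)(r+1)=(r+1)^2$ since here $d=2g-2=\widetilde g-1$. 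The key point is to intersect this with the translate $P^{\pm}$ of the Prym variety and track how the defining equations restrict.

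The main technical step is the infinitesimal analysis: the Zariski tangent space to $W^r_{2g-2}(\widetilde C)$ at $L$ is the annihilator of the image of the Petri (cup-product) map $\mu_{0,L}\colon H^0(\widetilde C,L)\otimes H^0(\widetilde C,\omega_{\widetilde C}\otimes L^\vee)\to H^0(\widetilde C,\omega_{\widetilde C})$, while the tangent space to $P^{\pm}$ is the $(-1)$-eigenspace of $\iota^*$ acting on $H^1(\widetilde C,\mathcal O_{\widetilde C})=T_L\mathrm{Pic}^{2g-2}(\widetilde C)$. So the tangent space to $V^r(C,\eta)$ at $L$ is the annihilator, inside that anti-invariant eigenspace, of the image of $\mu_{0,L}$; dually, using that on $\mathrm{Nm}^{-1}(\omega_C)$ one has $\omega_{\widetilde C}\otimes L^\vee\cong\iota^*L$ and $H^0(\widetilde C,\omega_{\widetilde C})^+\cong H^0(C,\omega_C)$, this is the annihilator of the image of the anti-invariant ``Prym-Petri'' map $\mu_{0,L}^-\colon\wedge^2 H^0(\widetilde C,L)\to H^0(\widetilde C,\omega_{\widetilde C})^-$. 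Counting dimensions, the target has dimension $g-1$ and the source has dimension $\binom{r+1}{2}$, so the expected codimension of $V^r(C,\eta)$ inside $P^{\pm}$ is at most $\binom{r+1}{2}$, giving expected dimension $\geq (g-1)-\binom{r+1}{2}$.

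To turn this tangent-space count into the claimed honest lower bound on $\dim_L V^r(C,\eta)$, I would exhibit $V^r(C,\eta)$ near $L$ as the zero scheme of an explicit section: restrict the Brill-Noether determinantal presentation to $P^{\pm}$ and observe, using Wirtinger's parity theorem (already quoted above, ensuring $h^0$ is constant mod $2$ on the relevant component) together with the $\iota^*$-equivariance of the Poincar\'e bundle presentation, that the restricted matrix can be taken skew-symmetric of size $(r+1)\times(r+1)$; then $V^r(C,\eta)$ is locally the locus where this skew-symmetric matrix has rank $\leq r-1$, i.e.\ a Pfaffian degeneracy locus, whose every component has codimension at most $\binom{r+1}{2}$ in $P^{\pm}$ by the standard bound for such loci. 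Since $\dim P^{\pm}=g-1$, this yields $\dim_L V^r(C,\eta)\geq g-1-\binom{r+1}{2}$. I expect the main obstacle to be the bookkeeping that makes the restricted determinantal description genuinely skew-symmetric — one must carefully set up the $\iota^*$-linearization of a Poincar\'e bundle on $\widetilde C\times\mathrm{Nm}^{-1}(\omega_C)$ and use Serre duality $\iota$-equivariantly so that the cup-product matrix is literally antisymmetric rather than merely antisymmetric up to a correction term; this is exactly the content of the references \cite{Vr1,Vr2} and of Welters' original infinitesimal study.
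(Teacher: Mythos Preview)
Your approach is correct and is precisely the standard argument from the cited references. Note, however, that the paper does not actually give a proof of this proposition: it simply attributes the result to \cite{Vr1} and \cite{Vr2} and moves on. What you have written is therefore not a comparison with an omitted proof but rather a (faithful) reconstruction of the argument those references contain---the Pfaffian degeneracy-locus description of $V^r(C,\eta)$ inside $P^{\pm}$, together with the codimension bound $\binom{r+1}{2}$ for skew-symmetric rank loci.

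Two remarks. First, the infinitesimal part of your sketch (the identification of $T_L V^r(C,\eta)$ with $(\mathrm{Im}\,\mu_{0,L}^-)^\perp$ via $\omega_{\widetilde C}\otimes L^\vee\simeq\iota^*L$) is in fact carried out explicitly in the paper immediately \emph{after} this proposition, in the build-up to Welters' Theorem; so you are anticipating material that appears a page later. Second, your self-identified ``main obstacle''---setting up the $\iota^*$-linearized Poincar\'e bundle so that the presentation matrix is honestly skew-symmetric---is exactly the substance of the cited papers (this is the same mechanism that governs theta-characteristics in \cite{Vr1,Vr2}), and you are right that without it one only gets the tangent-space estimate, not the genuine lower bound on $\dim_L$. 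Your proposal correctly separates these two layers.
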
 
We will refer to the number
\begin{equation*}\label{PBN}
\rho^-(g,r):=g-1- \binom{r+1}{2},
\end{equation*}
as the {\em Prym-Brill-Noether number}.

A priori one may hope that classical results of Brill-Noether theory carry over to \'{e}tale double covers of smooth curves and to the varieties $V^r(C,\eta)$, but unfortunately this is not the case. Indeed, many statements of this theory, such as the Brill-Noether Theorem and the Gieseker-Petri Theorem, hold only for a general curve $\widetilde C$ of genus $\widetilde g$ and they fail in the case where $\widetilde C$ is the \'{e}tale double cover of a genus $g$ curve. Nevertheless, an analogue of the Gieseker-Petri Theorem for Prym varieties was obtained by Welters \cite{Welters} by a degeneration argument similar to the one used by Eisenbud and Harris \cite{Eisenbud} in their proof of the classical statement.

 Since $\omega_{\widetilde C}=\pi^* \omega_C$, the push-pull formula yields the decomposition
 \begin{equation}\label{decomp}H^0(\widetilde C,\omega_{\widetilde C})=H^0(C,\omega_C)\oplus H^0(C,\omega_C(\eta)).\end{equation}
into invariant and anti-invariant forms under the action of $\iota$. Fix a line bundle $L \in \mathrm{Nm}^{-1}(\omega_C)$ and set $h^0(L)=r+1$. Since the composition $\mathrm{Nm}\circ \pi^*$ is the multiplication by $2$, the differential of $\mathrm{Nm}$ at $L$ is $2(_{}^{t}\pi^*)$, where 
$$ _{}^{t}{\pi}^{*}:(H^0(\widetilde C,\omega_{\widetilde C}))^\vee\rightarrow (H^0(C,\omega_C))^\vee$$
is the transpose of the pull-back of differential forms. By taking kernels, we obtain the identification 
\begin{equation}\label{milan}T_L(\mathrm{Nm}^{-1}(\omega_C))=(H^0(C,\omega_C(\eta)))^\vee\hookrightarrow (H^0 (\widetilde C,\omega_{\widetilde C}))^\vee,\end{equation} where the inclusion is the transpose of the projection map
\[
\begin{aligned}
p\colon H^0(\widetilde C,\omega_{\widetilde C}) &\longrightarrow H^0(C,\omega_C(\eta))\\
\lambda &\longmapsto \frac{1}{2}(\lambda-\iota^* \lambda)
\end{aligned}.
\]
By classical Brill-Noether theory (cf., e.g., \cite{Arbarello}), we know that 
\begin{equation}\label{tw}T_L(W^r_{2g-2}(\widetilde C))=(\mathrm{Im} \mu_{0,L})^\bot \subset (H^0 (\widetilde C,\omega_{\widetilde C}))^\vee,\end{equation}
where the so-called {\em Petri map} $$\mu_{0,L}:H^0(\widetilde{C}, L) \otimes H^0(\widetilde{C}, \omega_{\widetilde{C}} \otimes L^\vee)\rightarrow H^0(\widetilde{C},\omega_{\widetilde{C}})$$ is multiplication of global sections. In particular, the dimension of the tangent space at $L$ to $W^r_{2g-2}(\widetilde C)$ is:
\begin{equation}\label{dimw}
\mathrm{dim}\,T_L(W^r_{2g-2}(\widetilde C))= \rho(2g-1,r,2g-2) + \mathrm{dim(Ker}\mu_{0,L}),
\end{equation}
where the number $\rho(2g-1,r,2g-2):=2g-1-(r+1)^2$ equals the difference between the dimensions of the codomain and the domain of $\mu_{0,L}$.
Recalling \eqref{inter} and \eqref{milan}, we obtain:
\begin{equation}\label{tg}T_L(V^r(C,\eta))=(\mathrm{Im} \mu_{0,L})^\bot \cap (H^0(C,\omega_C(\eta))^\vee=(\mathrm{Im}(p\circ \mu_{0,L}))^\bot.\end{equation} 
By applying $\pi^*$ to the isomorphism $\mathrm{Nm}\,L\simeq \omega_C$, one gets $L\otimes \iota^* L\simeq \omega_{\widetilde C}$, or equivalently, $\omega_{\widetilde C}\otimes L^\vee\simeq \iota^* L$. Consider the following composition of maps:

   \begin{equation*}\label{composition}\xymatrix{ H^0(\widetilde C,L)\otimes H^0(\widetilde C,L) \ar[r]^(.45){1\otimes \iota^*}
    & H^0(\widetilde C,L)\otimes H^0(\widetilde C,\omega_{\widetilde C}\otimes L^\vee) \ar[r]^(.65){\mu_{0,L}} & H^0(C,\omega_{\widetilde C}) \ar[r]^{p} & H^0( C,\omega_C(\eta))\\
s\otimes t \ar@{|->}[r] & s\otimes \iota^* t \ar@{|->}[r] & s \cdot \iota^* t \ar@{|->}[r] & \frac{1}{2}(s\cdot (\iota^* t)-(\iota^* s) \cdot t),}\end{equation*}
which is clearly skew-symmetric. By restriction to $\large \wedge^2 H^0(\widetilde C,L)$, we thus obtain the map
\[
\begin{aligned}
\mu_{0,L}^- \colon \large \wedge^2 H^0(\widetilde C,L) &\to H^0( C,\omega_{ C}(\eta)) \\
s\wedge t &\mapsto \frac{1}{2}(s \cdot (\iota^* t)-(\iota^* s) \cdot t),
\end{aligned}
\]
which is called the {\em Prym-Petri map} of $L$. We can rewrite \eqref{tg} in terms of $\mu_{0,L}^-$ as $$T_L(V^r(C,\eta))=(\mathrm{Im} \mu_{0,L}^-)^\perp.$$ Since $\rho^-(g,r)$ equals the difference between the dimensions of the codomain and the domain of $\mu_{0,L}^-$, we conclude that 
\begin{equation}\label{dimv}
\mathrm{dim}(T_L(V^r(C,\eta)))= \rho^-(g,r) + \mathrm{dim(Ker}\mu_{0,L}^-),
\end{equation}
and $V^r(C,\eta)$ is smooth of dimension $\rho^-(g,r)$ at $L$ if and only if the Prym-Petri map is injective. Welters proved that this is the case for all $L \in \mathrm{Nm}^{-1}(\omega_C)$ provided that the cover $\pi$ is general. 
\begin{thm}[\cite{Welters} Thm. 1.11]
\label{Prym}
Let $(C,\eta)\in \mathcal R_g$ be general and let $\pi: \widetilde C\rightarrow C$ be the \'{e}tale double cover defined by $\eta$. Then the Prym-Petri map $\mu_{0,L}^-$ is injective for all $L \in \mathrm{Nm}^{-1}(\omega_C)$.
\end{thm}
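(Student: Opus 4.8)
The plan is to run the Eisenbud--Harris degeneration machinery of \cite{Eisenbud} in the anti-invariant setting, which is the route taken by Welters in \cite{Welters}. Rather than treating a general Prym curve directly, I would reduce the statement to exhibiting a single, maximally degenerate Prym curve at which $\mu_{0,L}^-$ is injective for \emph{every} admissible limit line bundle $L$ and every $r$. This reduction is formal. By \eqref{dimv} and the remark following it, injectivity of $\mu_{0,L}^-$ at $L$ is equivalent to $V^r(C,\eta)$ being smooth of dimension $\rho^-(g,r)$ at $L$; in particular, injectivity for all $L$ forces $V^r(C,\eta)=\emptyset$ whenever $\rho^-(g,r)<0$, since injectivity at an $L$ with $h^0(L)=r+1$ would give, via \eqref{dimv}, $\dim T_L(V^r(C,\eta))=\rho^-(g,r)<0$, impossible for a tangent space. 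Combined with the inequality $\dim_L V^r(C,\eta)\ge\rho^-(g,r)$ recalled above, this shows that the set of $(C,\eta)$ for which $\mu_{0,L}^-$ is injective for all $L$ is open: its complement is the image, under the proper projection to the moduli space, of the union over $r$ of the closed loci where some $V^r$ is singular or has a component of dimension $>\rho^-(g,r)$. It therefore suffices to produce one Prym curve, possibly a boundary point of a compactification of $\mathcal{R}_g$, at which the conclusion holds.

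For the test object I would take a flag-type Prym curve: let $C_0=E_1\cup_{p_1}E_2\cup\cdots\cup_{p_{g-1}}E_g$ be a chain of $g$ general elliptic curves, and let $\eta_0$ restrict to a general nonzero $2$-torsion point $\eta_i\in J(E_i)[2]$ on each component, the degenerate cover being realized as the admissible double cover $\pi_0\colon\widetilde C_0\to C_0$. Since every $\eta_i$ is nonzero, $\widetilde C_0$ is again a chain of elliptic curves $\widetilde E_i\to E_i$, glued so that two nodes of $\widetilde C_0$, swapped by $\iota$, lie over each $p_i$; one checks $\widetilde C_0$ has arithmetic genus $2g-1$. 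An admissible limit of line bundles $L\in\mathrm{Nm}^{-1}(\omega_C)$ degenerates to an $\iota$-equivariant limit linear series of degree $2g-2$ and rank $r$ on $\widetilde C_0$, given by aspects $L^{(i)}$ on the $\widetilde E_i$ with prescribed vanishing sequences at the two nodes over each $p_i$, the splitting \eqref{decomp} degenerating compatibly.

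The core of the argument is to construct a \emph{limit Prym-Petri map} on the central fibre and to prove, for $\mu_{0,L}^-$, the analogue of the Eisenbud--Harris statement that multiplication of global sections is compatible with degenerations of linear series. Granting this, a hypothetical nonzero $\sigma\in\ker\mu_{0,L}^-\subset\wedge^2 H^0(\widetilde C,L)$ would specialize to a nonzero element in the kernel of a map assembled from genus-one Prym-Petri-type multiplication maps on the components $\widetilde E_i$ and glueing conditions at the nodes. On each elliptic block the relevant multiplication of sections is injective for general $\eta_i$ and general attaching points (the elementary base case), while summing, along the chain, the Pl\"ucker ramification inequalities for the vanishing sequences of the aspects $L^{(i)}$ --- exactly as in the Eisenbud--Harris count --- shows on the one hand that no limit linear series with $\rho^-(g,r)<0$ can occur, so that $V^r(C_0,\eta_0)=\emptyset$ in that range, and on the other hand that, when $\rho^-(g,r)\ge 0$, every local contribution of such a $\sigma$ must vanish, forcing $\sigma=0$. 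Hence $\mu_{0,L}^-$ is injective for all admissible $L$ on the central fibre, and the semicontinuity reduction of the first paragraph yields the theorem for general $(C,\eta)$.

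The hard part will be precisely the construction of the limit Prym-Petri map and the proof of its compatibility with the degeneration: one must check that multiplication of global sections degenerates compatibly with the chosen limit linear series \emph{and} with the action of $\iota$, that the conjugate aspects $\iota^*L^{(i)}$ --- which enter through the identification $\omega_{\widetilde C}\otimes L^\vee\simeq\iota^*L$ --- are correctly bookkept on each component, and that the flag Prym curve $(C_0,\eta_0)$ is general enough (i.e.\ lies in the open locus identified in the first paragraph) for the genus-one building blocks to be injective. Once this technical core is in place, the semicontinuity reduction, the elliptic-curve base case, and the additivity of the Pl\"ucker inequalities along the chain are routine.
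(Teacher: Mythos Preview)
Your degeneration is not the one Welters uses, and the difference is fatal for the strategy you outline. If $\eta_0$ is nontrivial on \emph{every} elliptic component, then each $\widetilde E_i\to E_i$ is a connected isogeny and the preimage of each node $p_i$ of $C_0$ consists of two nodes of $\widetilde C_0$. The dual graph of $\widetilde C_0$ then has $g$ vertices and $2(g-1)$ edges, hence first Betti number $g-1$: your central fibre is \emph{not of compact type}, so the Eisenbud--Harris limit linear series machinery you invoke does not apply as stated. (You would need a genuinely different theory of limit $g^r_d$'s on non-treelike curves.) Moreover, on each $\widetilde E_i$ the attaching points over $p_{i-1}$ and $p_i$ are never ``general'': the two preimages of any point of $E_i$ differ by a fixed nonzero $2$-torsion class, precisely the constraint the paper singles out as the obstruction to running the Eisenbud--Harris argument verbatim. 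So your ``elementary base case'' (blockwise injectivity for general $\eta_i$ and general attaching points) is not available.

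Welters' actual test object, as sketched in the paper, is quite different: $C_0$ is a chain of rational curves and $g$ elliptic tails $E_1,\dots,E_g$, and $\eta_0$ is \emph{trivial on every component except $E_g$}, where it is a nonzero $2$-torsion point. Then the double cover $\widetilde C_0$ consists of two disjoint copies of the chain $C_0\setminus E_g$ both attached to a single connected elliptic curve $\widetilde E_g$; this \emph{is} of compact type, and the only ``bad'' $2$-pointed elliptic block is $(\widetilde E_g,P_1,P_2)$ with $\mathcal O_{\widetilde E_g}(P_1-P_2)$ of order two. The crux of Welters' argument is not that each block has injective Prym--Petri map; rather, one lets the ordinary Petri map have a kernel, writes down the explicit shape of any $\rho\in\ker\mu_{0,L}$ forced by the $2$-torsion constraint on $\widetilde E_g$, and then uses the skew-symmetry of $1\otimes\iota^*$ to see that such $\rho$ never lies in $\wedge^2 H^0(\widetilde C,L)$. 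Your outline misses both the correct degeneration and this final skew-symmetry step, which is where the anti-invariant hypothesis actually enters.
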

The result is analogous to the Gieseker-Petri Theorem, as it yields the smoothness of the Prym-Brill-Noether varieties $V^r(C,\eta)$ for a general $(C,\eta)$ and their emptiness in the cases where $\rho^-(g,r)<0$. The analogue of the existence part of the Brill-Noether Theorem for any Prym curve was instead established by Bertram:
\begin{thm}[\cite{Bertram} Thm. 1.4]\label{be}
Let $(C,\eta)$ be a Prym curve of genus $g$ and let $\pi: \widetilde C\rightarrow C$ be the \'{e}tale double cover defined by $\eta$. If $\rho^-(g,r)\geq0$, the Prym-Brill-Noether variety $V^r(C,\eta)$ is nonempty.
\end{thm}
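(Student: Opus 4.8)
The plan is to mimic the Kleiman--Laksov--Kempf proof of the existence half of the classical Brill--Noether Theorem, realizing $V^r(C,\eta)$ as a degeneracy locus on a translate of the Prym variety and computing its fundamental cohomology class. First I would fix the parity so that $V^r(C,\eta)=W^r_{2g-2}(\widetilde C)\cap P^{\varepsilon}$ with $\varepsilon=+$ or $-$ according to \eqref{inter}, and work over the abelian variety $P^{\varepsilon}$, which is a translate of the Prym $P$ and hence carries the principal polarization $\Xi$ as an ample class. Restricting the Poincar\'e line bundle on $\Pic^{2g-2}(\widetilde C)\times\widetilde C$ to $P^{\varepsilon}\times\widetilde C$ and pushing forward along a sufficiently positive effective divisor, one produces in the standard way a map of vector bundles $\phi\colon E\to F$ on $P^{\varepsilon}$ whose degeneracy locus $D_r(\phi)$ equals $V^r(C,\eta)$ scheme-theoretically; the bundle $F$ is, up to twist, the first direct image of the Poincar\'e sheaf, arranged so that $c_1$ of the relevant determinant recovers $\Xi$.

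The decisive structural feature --- and the point where the Prym case departs from the classical one --- is that the relation $\omega_{\widetilde C}\otimes L^\vee\simeq\iota^*L$ together with the involution $\iota$ endows $\phi$ with a symmetry. Concretely, the identification $H^0(\widetilde C,\omega_{\widetilde C}\otimes L^\vee)\simeq H^0(\widetilde C,\iota^*L)$ turns the Petri pairing into a symmetric form, and passing to the anti-invariant quotient $p$ exactly as in the construction of $\mu_{0,L}^-$ realizes $V^r(C,\eta)$ as the locus where a (skew-)symmetric bundle map drops rank. The expected codimension of such a locus is therefore the triangular number $\binom{r+1}{2}$, not the naive determinantal $(r+1)^2$, in exact agreement with $\rho^-(g,r)=\dim P^{\varepsilon}-\binom{r+1}{2}$.

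I would then invoke the Harris--Tu / J\'ozefiak--Lascoux--Pragacz formula for the fundamental class of a symmetric degeneracy locus to compute the expected class of $V^r(C,\eta)$ in $H^*(P^{\varepsilon})$. The output of this computation is a \emph{positive} rational multiple of $\Xi^{\binom{r+1}{2}}$: the symmetry replaces the classical product of factorials by one involving the numbers $\tfrac{i!}{(2i)!}$, but crucially leaves the sign positive and the cohomological type equal to $\binom{r+1}{2}$ copies of the polarization. Since $\Xi$ is ample and $\binom{r+1}{2}\le g-1=\dim P^{\varepsilon}$ precisely when $\rho^-(g,r)\ge 0$, the class $\Xi^{\binom{r+1}{2}}$ is nonzero.

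Finally, nonemptiness follows by positivity: were $V^r(C,\eta)$ empty, the (skew-)symmetric map $\phi$ would be everywhere of maximal rank exceeding $r$ on the projective variety $P^{\varepsilon}$, forcing its expected class to vanish --- contradicting the nonvanishing of the positive multiple of $\Xi^{\binom{r+1}{2}}$ just obtained. Equivalently one may quote the Fulton--Lazarsfeld nonemptiness theorem for symmetric degeneracy loci, whose positivity hypotheses are guaranteed by the ampleness of $\Xi$. I expect the main obstacle to be the middle step: correctly setting up the symmetric structure of $\phi$ induced by $\iota$ and by $\omega_{\widetilde C}\otimes L^\vee\simeq\iota^*L$, and verifying that the symmetric-degeneracy-locus class is a positive multiple of $\Xi^{\binom{r+1}{2}}$ rather than merely proportional to it with an uncontrolled sign. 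Once the symmetry and positivity are pinned down, the existence statement for an arbitrary Prym curve $(C,\eta)$ --- and not just a general one --- is immediate, since the argument uses only the ampleness of the polarization and invokes no genericity hypothesis.
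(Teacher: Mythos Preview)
The paper does not prove this statement at all: Theorem~\ref{be} is simply quoted from \cite{Bertram} with no argument supplied, so there is no ``paper's own proof'' to compare against. Your proposal is therefore not being measured against anything in the text; it is rather a sketch of how the cited result is established.

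That said, your outline is essentially the strategy Bertram (building on the class computation of De~Concini--Pragacz) actually uses: one realizes $V^r(C,\eta)$ as a Lagrangian/isotropic degeneracy locus on a translate of $P$, exploits the symmetry coming from $\omega_{\widetilde C}\otimes L^\vee\simeq\iota^*L$, and shows that the expected class is a positive multiple of $\Xi^{\binom{r+1}{2}}$, which is nonzero by ampleness of $\Xi$ when $\binom{r+1}{2}\le g-1$. One point worth tightening is your hedging between ``symmetric'' and ``skew-symmetric'': the correct structure depends on the parity of $r$ (equivalently, on whether one works in $P^+$ or $P^-$), and the relevant degeneracy-locus formula is the one for isotropic subbundles with respect to a nondegenerate form, which in either parity yields a class of the shape $c\cdot\Xi^{\binom{r+1}{2}}$ with $c>0$. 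Apart from making that choice explicit and checking the positivity of the constant carefully, your plan matches the published proof.
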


Welters' proof is by degeneration to a reduced nodal curve $C_0$ consisting of a string of rational curves and $g$ elliptic curves $E_1,...,E_g$, such that, if $x_i$ and $y_i$ are the two intersection points of any $E_i$ with two adjacent components, the line bundle $\mathcal O_{E_i}(x_i-y_i)$ is not a torsion point in $\mathrm{Pic}^0(E_i)$. Consider a line bundle $\eta_0$ on $C_0$ which is trivial on every component except on $E_g$, where it restricts to a non-zero point of $\mathrm{Pic}^0(E_g)[2]$. The étale double cover $\widetilde C_0$ of $C_0$ defined by $\eta_0$ then looks as follows:
\begin{center}
\includegraphics[width=%
1.0\textwidth]{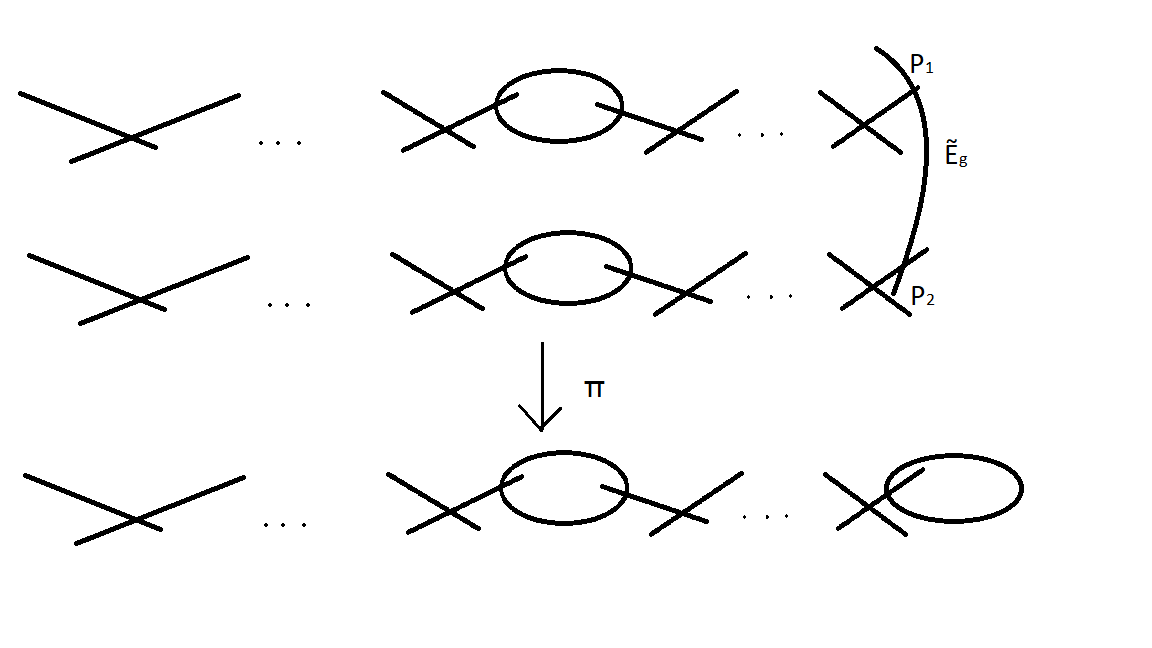}
\end{center}
The points $P_1$ and $P_2$ on $\widetilde{E}_g$ are not $\mathbb{Z}$-independent; indeed, $\mathcal O_{\widetilde E_g}(P_1-P_2)\in \mathrm{Pic}^0(\widetilde E_g)$ is a $2$-torsion point. 
This is why Eisenbud and Harris' proof \cite{Eisenbud} of the Gieseker-Petri Theorem fails for $\widetilde C_0$. However, using the theory of limit linear series and the Brill-Noether theory of a (non general) $2$-pointed elliptic curve, Welters wrote down the explicit form of any element $\rho$ in the kernel of the relevant Petri maps and, thanks to the skew-symmetry of $1\otimes \iota^*$, concluded that such a $\rho$ never lies in the kernel of the corresponding Prym-Petri map.

\subsection{The map $\mu_{0,L}^+$}\label{più}
The decomposition \eqref{decomp} provides a splitting of the Petri map $\mu_{0,L}$ of any line bundle $L \in \mathrm{Nm}^{-1}(\omega_C)$ into a $\iota$-anti-invariant part, namely, the Prym-Petri map $\mu_{0,L}^-$, and a $\iota$-invariant part
\[
\begin{aligned}
\mu_{0,L}^+\colon \mathrm{Sym}^2 H^0(\widetilde{C},L) &\to H^0(C,\omega_{ C}) \\
s\otimes t+t \otimes s &\mapsto s \cdot (\iota^* t)+(\iota^* s) \cdot t.
\end{aligned}
\]
As already remarked, the Prym-Petri map $\mu_{0,L}^-$ governs the smoothness of the varieties $V^r(C,\eta)$ and has thus been extensively investigated. By contrast, the map $\mu_{0,L}^+$ has been almost ignored so far. Up to our knowledge, it only appeared in the proof of the uniruledness of $\mathcal R_8$ due to Farkas and Verra \cite{FV}.

We provide an interpretation of the map $\mu_{0,L}^+$ by considering the inclusion $V^r(C,\eta)\subset W^r_{2g-2}(\widetilde C)$ and the induced short exact sequence of linear maps
$$0\rightarrow T_L(V^r(C,\eta))\rightarrow T_LW^r_{2g-2}(\widetilde C)\rightarrow N_{V^r(C,\eta)/W^r_{2g-2}(\widetilde C),L}\rightarrow 0,$$
where $N_{V^r(C,\eta)/W^r_{2g-2}(\widetilde C),L}$ is the normal space at the point $L$ of $V^r(C,\eta)$ in $W^r_{2g-2}(\widetilde C)$. By \eqref{dimw} and \eqref{dimv}, we get:
\begin{equation*}
\begin{split}
\mathrm{dim}_{\mathbb C}&(N_{V^r(C,\eta)/W^r_{2g-2}(\widetilde C),L})=\\&=\rho(2g-1,r,2g-2)+\mathrm{dim(Ker}\mu_{0,L})- \rho^-(g,r) - \mathrm{dim(Ker}\mu_{0,L}^-)=\\
&=g-\frac{(r+1)(r+2)}{2}+\mathrm{dim(Ker}\mu_{0,L}^+),
\end{split}
\end{equation*}
where we have used that $\mathrm{Ker}\mu_{0,L}=\mathrm{Ker}\mu_{0,L}^+\oplus \mathrm{Ker}\mu_{0,L}^-$ by construction. We set $$\rho^+(g,r):=\rho(2g-1,r,2g-2)- \rho^-(g,r)=g-\frac{(r+1)(r+2)}{2}$$ and notice that this number equals the difference between the dimensions of the codomain and the domain of $\mu_{0,L}^+$. We conclude that the dimension of the normal space $N_{V^r(\widetilde X)/W^r_{2g-2}(\widetilde X),L}$ equals the dimension of the cokernel of $\mu_{0,L}^+$. More precisely, \eqref{tw} and \eqref{tg} imply that
$$
N_{V^r(\widetilde X)/W^r_{2g-2}(\widetilde X),L} =(\mathrm{Im} \mu_{0,L})^\bot \cap (H^0(\omega_C))^\vee=(\mathrm{Im}(q\circ\mu_{0,L}))^\bot,
$$
where $q: H^0(\widetilde C,\omega_{\widetilde C})\to H^0(C,\omega_C)$ is the projection mapping a form $\lambda$ to $\frac{1}{2}(\lambda+\iota^* \lambda)$. The composition
\begin{align*}q\circ \mu_{0,L}\circ(1\otimes \iota^*):H^0(\widetilde C,L)\otimes H^0(\widetilde C,L)&\longrightarrow H^0(C,\omega_C)\\
\end{align*}
maps a decomposable tensor $s\otimes t$  to the invariant form $s(\iota^*t)+(\iota^*s)t$ and is thus symmetric. Its restriction to $\mathrm{Sym}^2 H^0(\widetilde{C},L)$ coincides with the map $\mu_{0,L}^+$ and this provides the identification
$$
N_{V^r(\widetilde X)/W^r_{2g-2}(\widetilde X),L}=(\mathrm{Im} \mu_{0,L}^+)^\bot.$$

When $(C,\eta)\in \mathcal R_g$ is general, it is natural to wonder whether the map $\mu_{0,L}^+$ is injective for all $L\in \mathrm{Nm}^{-1}(\omega_C)$. Unfortunately, this trivially fails for any line bundle $L\in V^r(C,\eta)$ where $r$ is any fixed integer $r$ such that $\rho^-(g,r)\geq 0$ but $\rho(2g-1,r,2g-2)<0$; indeed, the Prym-Petri map $\mu_{0,L}^-$ is injective by Theorem \ref{Prym}, but our assumptions prevent $\mu_{0,L}^+$from being injective because $\dim\mathrm{Ker} \mu_{0,L}^+\geq -\rho^+(g,r)>0$. More generally, the map $\mu_{0,L}^+$ fails to be injective for any line bundle $L\in V^r(C,\eta)$ as soon as 
\begin{equation}\label{brutta}
\rho^-(g,r)>\max\{ -1,\rho(2g-1,r,2g-2)\},
\end{equation} 
which implies $\rho^+(g,r)<0$. The inequalities \eqref{brutta} can be rewritten only in terms of $\rho$ as 
\begin{equation}\label{ro}-r\leq \rho(2g-1,r,2g-2)<r.\end{equation} In this range, the equality $\dim\mathrm{Ker} \mu_{0,L}^+= -\rho^+(g,r)$ is the best one may hope for; since $V^r(C,\eta)$ is smooth at $L$ by Theorem \ref{Prym}, this hope is equivalent to the expectation that $W^r_{2g-2}(\widetilde C)$ and $V^r(C,\eta)$ coincide in a neighborhood of $L$. 
\begin{conj}
Let $(C,\eta)\in \mathcal R_g$ be general and let $\pi: \widetilde C\rightarrow C$ be the \'{e}tale double cover defined by $\eta$. Then, the following hold: 
\begin{itemize}
\item[(i)] If $-r\leq \rho(2g-1,r,2g-2) < r$, then $W^r_{2g-2}(\widetilde C)=V^r(C,\eta)$. In particular, for all $L\in V^r(C,\eta)$ one has $$\mathrm{dim(Ker}\mu_{0,L})=\mathrm{dim(Ker}\mu_{0,L}^+)=-\rho^+(g,r).$$
\item[(ii)] If $\rho(2g-1,r,2g-2)\geq r$, then both $\mu_{0,L}$ and $\mu_{0,L}^+$ are injective for all $L\in V^r(C,\eta)$.
\end{itemize}
\end{conj}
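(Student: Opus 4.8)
The plan is to reduce the whole statement to a single maximal-rank assertion for the invariant Petri map and then prove that assertion by degeneration. Concretely, the crux is that for general $(C,\eta)\in\mathcal R_g$ the map $\mu_{0,L}^+$ has maximal rank for every $L\in V^r(C,\eta)$: injective when $\rho^+(g,r)\geq 0$ -- which, by the identity $\rho(2g-1,r,2g-2)=2\rho^-(g,r)-r$, is exactly the range $\rho(2g-1,r,2g-2)\geq r$ of (ii) -- and surjective when $-r\leq\rho(2g-1,r,2g-2)<r$, i.e.\ $0\leq\rho^-(g,r)<r$. Granting this together with Welters' Theorem \ref{Prym}, part (ii) follows from the splitting $\mathrm{Ker}\,\mu_{0,L}=\mathrm{Ker}\,\mu_{0,L}^+\oplus\mathrm{Ker}\,\mu_{0,L}^-$; in part (i) the surjectivity of $\mu_{0,L}^+$ says exactly that the normal space $N_{V^r(C,\eta)/W^r_{2g-2}(\widetilde C),L}=(\mathrm{Im}\,\mu_{0,L}^+)^\bot$ vanishes at every point of $V^r(C,\eta)$, so that $V^r(C,\eta)$ is a smooth component of $W^r_{2g-2}(\widetilde C)$ near each of its points, and the only remaining global content is that $W^r_{2g-2}(\widetilde C)\subseteq\mathrm{Nm}^{-1}(\omega_C)$ -- the parity condition is then automatic, since in this range $\rho^-(g,r+1)<0$ and hence $V^{r+1}(C,\eta)=\emptyset$ by Theorem \ref{Prym}. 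As maximal rank and such local and global equalities are closed conditions that degenerate well, it suffices to exhibit one favourable cover.

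I would use Welters' pointed nodal Prym curve $(C_0,\eta_0)$: a chain of $g$ elliptic curves $E_1,\dots,E_g$ joined by rational bridges, with $\eta_0$ trivial on every component except $E_g$, where it restricts to a non-trivial two-torsion point, and its \'etale double cover $\widetilde C_0$. A line bundle in $W^r_{2g-2}(\widetilde C)$ on a cover near $\widetilde C_0$ degenerates to a refined limit linear series of rank $r$ and degree $2g-2$ on $\widetilde C_0$, compatibly with the involution $\iota$, and the Eisenbud--Harris/Welters machinery bounds $\mathrm{Ker}\,\mu_{0,L}$ by the kernel of a direct sum, indexed by the components of $\widetilde C_0$, of local Petri-type maps whose kernels are written down explicitly from the vanishing sequences of the corresponding aspects.

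The genuinely new point is to keep track of the $\iota$-action on this combinatorial kernel and to compute the dimension of its invariant part, rather than merely to check, as Welters does, that its anti-invariant part vanishes. Over the components lying away from the distinguished tail, $\widetilde C_0$ is two disjoint copies of the underlying chain exchanged by $\iota$, so invariant kernel elements there occur in symmetric pairs and are controlled by the classical (non-Prym) Petri analysis; the real content is concentrated on $\widetilde E_g$, where the relation $\mathcal O_{\widetilde E_g}(P_1-P_2)\in\mathrm{Pic}^0(\widetilde E_g)[2]$ -- precisely the reason the classical Petri map can fail to be injective there -- has to be exploited to show that the invariant kernel contributes exactly $\max\{0,-\rho^+(g,r)\}$ and nothing more. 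For the global half of (i) I would complement this with an upper bound on $\dim W^r_{2g-2}(\widetilde C_0)$, obtained by counting refined limit series on $\widetilde C_0$ whose norm is not $\omega_{C_0}$ and showing they form a locus of dimension $<\rho^-(g,r)$, so such series cannot persist on the generic cover; alternatively one can argue directly that for general $(C,\eta)$ any $L$ of degree $2g-2$ with $h^0(L)\geq r+1$ has $\mathrm{Nm}(L)\simeq\omega_C$, by comparing $h^0(\widetilde C,L\otimes\iota^*L)=h^0(C,\mathrm{Nm}(L))+h^0(C,\mathrm{Nm}(L)\otimes\eta)$ with the dimension of the image of the multiplication map $H^0(\widetilde C,L)\otimes H^0(\widetilde C,\iota^*L)\to H^0(\widetilde C,L\otimes\iota^*L)$.

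The main obstacle is exactly this equivariant bookkeeping on $\widetilde E_g$: Welters could invoke the skew-symmetry of $1\otimes\iota^*$, which forces the anti-invariant constraint for free, whereas nothing analogous bounds the invariant part from above, so one must genuinely control multiplication of sections of the limit series on the non-general two-pointed elliptic curve $\widetilde E_g$ and verify that the two-torsion coincidence creates no invariant relations beyond those already imposed by $\rho^+(g,r)$. A further difficulty specific to part (i) is that surjectivity, unlike injectivity, is not automatically preserved under the degeneration -- limit series may acquire base points or spurious sections -- so the refined limit linear series formalism, and in particular its compatibility with $\iota$, must be handled carefully.
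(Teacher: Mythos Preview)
The statement you are trying to prove is not a theorem in the paper: it is labeled there as an \emph{Expectation} (the environment \texttt{conj} is declared with the name ``Expectation''), and the paper offers no proof whatsoever. The surrounding discussion only motivates the statement by explaining why it is the best one can hope for and by observing that the complementary range $\rho(2g-1,r,2g-2)<-r$ is already handled by Schwarz's emptiness result. So there is no ``paper's own proof'' to compare your proposal against; you are sketching an attack on an open problem.

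As such, your outline is a reasonable strategy rather than a proof, and you are candid about this yourself. The reductions you make are correct: the splitting $\mathrm{Ker}\,\mu_{0,L}=\mathrm{Ker}\,\mu_{0,L}^+\oplus\mathrm{Ker}\,\mu_{0,L}^-$ together with Welters' Theorem does reduce (ii) to injectivity of $\mu_{0,L}^+$, and your parity argument for (i) via $\rho^-(g,r+1)<0$ is fine. But the two places you flag as obstacles are genuine gaps, not mere bookkeeping. First, bounding the \emph{invariant} part of the limit Petri kernel on $\widetilde E_g$ from above has no analogue of the skew-symmetry trick that Welters uses; you would need an actual computation on a $2$-pointed elliptic curve with a $2$-torsion relation between the marked points, and it is not clear a priori that the answer comes out to exactly $\max\{0,-\rho^+(g,r)\}$. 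Second, and more seriously, the global containment $W^r_{2g-2}(\widetilde C)\subseteq\mathrm{Nm}^{-1}(\omega_C)$ in (i) is an equality of schemes, not a local statement, and neither of the two approaches you sketch is close to a proof: the dimension count on $\widetilde C_0$ would have to rule out \emph{all} limit linear series with $\mathrm{Nm}\neq\omega_C$, not just bound their dimension, since $W^r_{2g-2}$ could have excess components; and the ``direct'' argument via $h^0(\widetilde C,L\otimes\iota^*L)$ does not obviously force $\mathrm{Nm}(L)\simeq\omega_C$ without further input. In short, your proposal identifies the right shape of the problem but leaves the heart of it --- precisely what the paper leaves as an expectation --- unresolved.
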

The situation in the remaining cases $ \rho(2g-1,r,2g-2)<-r$ is already clear, as $W^r_{2g-2}(\widetilde C)$ turns out to be empty by a more general result of Schwarz \cite{Schwarz} that we will recall in Section \ref{tre}.

\section{Theta-characteristics and vanishing thetanulls}\label{due}
\subsection{Preliminaries on theta-characteristics}
A \emph{theta-characteristic} on a smooth irreducible curve $C$ is a line bundle $\theta \in \mathrm{Pic}^{g-1}(C)$ such that $\theta^{\otimes 2} = \omega_C$. Theta-characteristics are of two types, called odd and even according to the parity of the dimension of the space of their global sections. The set of theta-characteristics on $C$, denoted by $\mathrm{Th}(C)$, is a principal homogeneous space for the space $J(C)[2]$ of two-torsion points in the Jacobian of $C$, that is, $J(C)[2]$ acts freely and transitively on $\mathrm{Th}(C)$. In particular, we have $|\mathrm{Th}(C)|=|J(C)[2]|=2^{2g}$.

The $\mathbb{F}_2$-vector space $J(C)[2]$ is endowed with a nondegenerate symplectic form $$\langle\cdot,\cdot\rangle:J(C)[2] \times J(C)[2]\rightarrow \mathbb{F}_2,$$
which is called \emph{Weil pairing} and is defined as follows (\cite{Vr1}). For any pair of points $\eta,\epsilon \in J(C)[2]$, one can write $\eta=\mathcal{O}_C(D)$ and $\epsilon=\mathcal{O}_C(E)$ for two divisors $D$ and $E$ on $C$ with disjoint support, and choose rational functions $f$ and $g$ on $C$ such that $\mathrm{div}(f)=2D$ and $\mathrm{div}(g)=2E$. 
The condition on the supports of $D$ and $E$ ensures the validity of the so-called \emph{Weil Reciprocity Law} (see \cite{Vr2}):
$$f((g))=g((f)),$$
where the evaluation of a rational function $h$ at a divisor $Z$ whose support is disjoint from the set of zeros and poles of $h$ is defined as $h(Z):=\prod_{p \in C} h(p)^{\mathrm{mult}_p Z}$.
Hence, we get
$$\frac{f(2E)}{g(2D)}={\left(\frac{f(E)}{g(D)}\right)}^2=1,$$
and $\frac{f(E)}{g(D)}=\pm 1$.
The value of the Weil pairing at the pair $(\eta,\epsilon)$ is given by the following formula:
$$(-1)^{\langle \epsilon, \eta\rangle}=\frac{f(E)}{g(D)},$$
and one can check that this definition is independent of both the divisors $D$, $E$ and the rational functions $f$, $g$.

Given a symplectic vector space $(V,\langle \cdot,\cdot \rangle)$ over $\mathbb{F}_2$, we denote by $Q(V)$ the set of quadratic forms on $V$ with fixed polarity equal to the symplectic form $\langle \cdot,\cdot \rangle$, that is, all functions $q:V\rightarrow \mathbb{F}_2$ that satisfy the identity
$$q(x+y)=q(x)+q(y)+\langle x,y \rangle, \quad \forall \ x,y \in V.$$
Having fixed a symplectic basis $(e_1,...,e_g,f_1,...,f_g)$ of $V$, the so-called \emph{Arf invariant} of a quadratic form $q \in Q(V)$ is defined as
$$\mathrm{arf}(q):=\sum_{i=1}^g q(e_i)\cdot q(f_i) \in \mathbb{F}_2,$$
this definition being independent of the basis. A quadratic form $q \in Q(V)$ is called {\em even} if $\mathrm{arf}(q)=0$, and {\em odd} if $\mathrm{arf}(q)=1$. The space of even (respectively, odd) quadratic forms is denoted by $Q(V)^+$ (resp., $Q(V)^-$). An easy computation gives $|Q(V)^+|=2^{g-1}(2^g+1)$ and $|Q(V)^-|=2^{g-1}(2^g-1)$.

Coming back to theta-characteristics, to any $\theta \in \mathrm{Th}(C)$ one associates the so-called {\em theta-form}
$q_{\theta}:J(C)[2]\rightarrow \mathbb{F}_2$,
by setting
$$q_{\theta}(\eta):=h^0(C,\eta \otimes \theta)+h^0(C,\theta) \,\mathrm{mod} \,2.$$
For any $\epsilon, \eta \in J(C)[2]$, the {\em Riemann-Mumford relation} (\cite{Vr1}) yields:
$$h^0(C,\theta \otimes \epsilon \otimes \eta)+h^0(C,\theta \otimes \epsilon)+h^0(C,\theta \otimes \eta)+h^0(C,\theta)\equiv\langle \eta, \epsilon \rangle \, \mathrm{mod}\, 2,$$
or equivalently, $q_{\theta}\in Q(J(C)[2])$. We thus identify the two spaces
\begin{equation}\label{id}\mathrm{Th}(C)=Q(J(C)[2]).\end{equation}
Since $\mathrm{arf}(q_{\theta})=h^0(C,\theta)\,\, \mathrm{ mod }\,\,2$ for any $\theta \in \mathrm{Th}(C)$, under this identification even (respectively, odd) theta-characteristics correspond to forms in $Q(J(C)[2])^+$ (resp., $Q(J(C)[2])^-$). Hence, on a genus $g$ curve $C$ there are precisely $2^{g-1}(2^g+1)$ even theta-characteristics and $2^{g-1}(2^g-1)$ odd ones.

Following Atiyah \cite{Atiyah}, a \emph{spin curve} of genus $g$ is a pair $(C,\theta)$, where $C$ is a smooth irreducible curve of genus $g$ and $\theta \in \mathrm{Th}(C)$. Mumford (\cite{Vr1}) and Atiyah (\cite{Atiyah}) proved that the parity of a spin curve, that is, the value of its Arf invariant, is locally constant in families. As a consequence, the moduli space $\mathcal{S}_g$ parametrizing spin curves of genus $g$ splits into two connected components $\mathcal{S}_g^+$ and $\mathcal{S}_g^-$: a pair  $(C,\theta)$ lies in $\mathcal{S}_g^+$ if $\theta$ is an even theta-characteristic, and in $\mathcal{S}_g^-$ otherwise.

For each $g,r\geq 0$ one can define the locus
$$\mathcal{S}_g^r:=\{(C,\theta) \in \mathcal{S}_g \ | \ h^0(C,\theta)\geq r+1 \ \mathrm{and} \ h^0(C,\theta)\equiv r+1 \ \mathrm{mod} \ 2\}.$$
Harris (\cite{Vr2}) proved that the dimension of every component of $\mathcal{S}_g^r$ is bounded below by the number $3g-3-{r+1 \choose 2}$, to which we refer as the expected dimension of $\mathcal{S}_g^r$. It may well be the case that  $\mathcal{S}_g^r$ is nonempty even when its expected dimension is negative. For instance, $\mathcal{S}_g^{[\frac{g-1}{2}]}\neq\emptyset$ contains hyperelliptic curves and is thus nonempty although in this case  $3g-3-{r+1 \choose 2}$ is very negative. Existence of components of $\mathcal{S}_g^r$ having the expected dimension has been established by Farkas  \cite{theta} in the range $1\leq r\leq 11, r\neq 10$ for all $g\geq g(r)$ where $g(r)$ is an  explicit integer.

In the case where $r=1$,  the locus $\mathcal{S}_g^1$ parametrizes curves having a so-called {\em vanishing thetanull}, that is, an effective even theta characteristic, and is a divisor in $\mathcal{S}_g^+$. Indeed, every theta characteristic $\theta$ on a general curve $C$ of genus $g$ has at most one section. This directly follows from the base-point-free pencil trick, which implies that, as soon as $H^0(C,\theta)$ contains a pencil $V$, the kernel of the Petri map 
$$\mu_{0,V}:V\otimes H^0(C,\theta)\rightarrow H^0(C,\omega_C)$$ is nonzero; indeed, one has $\mathrm{Ker}\mu_{0,V}\simeq H^0(C, \mathcal{O}_C(B))\neq 0$, where $B$ is the base locus of $V$. The Gieseker-Petri Theorem thus excludes the existence of such a pencil if the curve $C$ is general.

\subsection{Invariant vanishing thetanulls on étale double covers}\label{inv}
Let $\pi:\widetilde C\to C$ be the irreducible étale double cover associated with a general Prym curve $(C,\eta)$ of genus $g$. As explained at the end of Section \ref{uno}, the Brill-Noether varieties $W^r_{2g-2}(\widetilde C)$ governing the singularities of the theta divisor of $J(\widetilde C)$ are nonempty as soon as $\rho(2g-1,r,2g-2)\geq -r$; in particular, for values of $g$ for which there exists an $r$ yielding $-r\leq \rho(2g-1,r,2g-2)< 0$, the curve $\widetilde C$ is Brill-Noether special.

We now fix $g,r$ such that $\rho(2g-1,r,2g-2)\geq 0$; under this condition, one may still hope that $W^r_{2g-2}(\widetilde C)$ is smooth and of the expected dimension. However, this expectation fails even for $r=1$, as observed by Welters himself \cite[Rmk. 1.12]{Welters}; in particular, $\widetilde C$ never satisfies the Gieseker-Petri Theorem. By the base-point-free pencil trick, $W^1_{2g-2}(\widetilde C)\setminus W^2_{2g-2}(\widetilde C)$ is singular at any point defining a vanishing thetanull. On the other hand, a vanishing thetanull on $\widetilde C$ is easily obtained as pullback $\pi^*M$ of a theta-characteristics $M$ on $C$ such that both $M$ and $M\otimes \eta$ are odd. The following nice argument taken from \cite[Prop. 4]{Beauville} counts the number of such $M$s and thus in particular proves their existence. 

For any $M\in \mathrm{Th}(C)$, the push-pull formula yields
$$
h^0(\widetilde C,\pi^*M)=h^0(C,\pi_*\pi^*M)=h^0(C,M)+h^0(C,M\otimes \eta)=q_M(\eta)\,\mathrm{mod}\,2.
$$
We are looking for those $M\in \mathrm{Th}(C)$ such that $q_M(\eta)=0$ and $\mathrm{arf}(q_M)=1$. Pick $\epsilon \in  J(C)[2]$ such that the Weil pairing $\langle\eta,\epsilon\rangle=1$ and denote by $\Sigma\subset J(C)[2]$ the plane spanned by $\eta$ and $\epsilon$. Since $J(C)[2]=\Sigma\oplus \Sigma^\bot$, any $M\in \mathrm{Th}(C)$ is completely determined by the restrictions $q_M|_\Sigma$ and $q_M|_{\Sigma^\bot}$. The condition $q_M(\eta)=0$ yields $\mathrm{arf}(q_M|_\Sigma)=q_M(\eta)q_M(\epsilon)=0$. Therefore, $M$ is determined by the value $q_M(\epsilon)\in \mathbb F_2$ and by the restriction $q_M|_{\Sigma^\bot}$, which is a quadratic form on $\Sigma^\bot$ of arf invariant $1$ (because $\mathrm{arf}(q_M)=\mathrm{arf}(q_M|_\Sigma)+\mathrm{arf}(q_M|_{\Sigma^\bot})$). Since $\dim_{\mathbb F_2} \Sigma^\bot=2(g-1)$, we have $2^{g-2}(2^{g-1}-1)$ choices for $q_M|_{\Sigma^\bot}$ and, having the possibility of choosing $q_M(\epsilon)$, we obtain  $2^{g-1}(2^{g-1}-1)$ theta-characteristics $M$ on $C$ as above. Since $ \pi^*(M\otimes \eta)\simeq \pi^*M$, this construction provides precisely $2^{g-2}(2^{g-1}-1)$ vanishing thetanulls on $\widetilde C$, which are invariant under the covering involution. 

\section{Brill-Noether theory for double covers}\label{tre}
Given the étale double cover $\pi:\widetilde C\to C$ of a general Prym curve $(C,\eta)\in\mathcal R_g$, it is natural to investigate not only the Prym-Brill-Noether varieties $V^r(C,\eta)$ but any Brill-Noether variety $W^r_d(\widetilde C)$. However, not much is known about the Brill-Noether theory of $\widetilde C$. The following non-existence result follows from \cite{Schwarz}, where Schwarz proved a more general statement concerning étale cyclic covers of arbitrary degree.
\begin{thm}[\cite{Schwarz}]\label{irene}
Let $\pi:\tilde C\rightarrow C$ be the \'{e}tale double cover associated with a general Prym curve $(C,\eta)\in \mathcal R_g$. Then the Brill-Noether variety $W^r_d(\widetilde C)$ is empty if 
$$\rho(2g-1,r,d)<-r.$$ 
\end{thm}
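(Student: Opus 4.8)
The plan is to deduce this from the general theorem of Schwarz on \'etale cyclic covers, but since that is a black box, let me instead sketch a self-contained degeneration argument in the spirit of Welters' proof of Theorem \ref{Prym}, specialized to the double-cover setting. The starting point is the same degeneration as in Welters' argument: degenerate $(C,\eta)$ to the nodal curve $C_0$ consisting of a chain of $g$ elliptic components $E_1,\dots,E_g$ joined by rational bridges, with $\eta_0$ trivial on all components except $E_g$, where it is a nontrivial $2$-torsion point. Since $W^r_d(\widetilde C)$ is a degeneracy locus whose class, when nonempty, is computed by a Chern-class formula, emptiness is an open condition and will follow if we show that the corresponding limit object — a limit linear series of degree $d$ and dimension $r$ on the nodal curve $\widetilde C_0$ compatible with the cover — cannot exist once $\rho(2g-1,r,d)<-r$.

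First I would set up the limit linear series bookkeeping on $\widetilde C_0$. The cover $\widetilde C_0\to C_0$ is again a chain: over $E_i$ for $i<g$ it is two disjoint copies $\widetilde E_i'\sqcup\widetilde E_i''$, while over $E_g$ it is a single connected double cover $\widetilde E_g$, and the involution $\iota$ swaps the two copies over each $E_i$ ($i<g$) and acts as the covering involution on $\widetilde E_g$. A limit $\mathfrak g^r_d$ on $\widetilde C_0$ assigns to each component an aspect with prescribed vanishing sequences at the nodes; the compatibility condition at each node forces the vanishing sequence on one side to be ``dual'' to that on the other. The key numerical input is the additivity of the Brill-Noether number: summing the local contributions $\rho$ over the components of the chain and adding the ramification contributions at the nodes, one gets that a limit series can exist only if the total $\rho$ is $\geq 0$, i.e. $\rho(2g-1,r,d)\geq -(\text{correction from }\iota)$. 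The point of the $\eta$-twist is exactly that on $\widetilde E_g$ the two branch points $P_1,P_2$ satisfy $\mathcal O_{\widetilde E_g}(P_1-P_2)\in\mathrm{Pic}^0(\widetilde E_g)[2]$, which is a \emph{non-general} position; I would use the Brill-Noether analysis of this special $2$-pointed elliptic curve (as Welters does) to show that the vanishing sequences that the cover structure forces on $\widetilde E_g$ are more constrained than for a general $2$-pointed elliptic curve, and this is what sharpens the bound from ``$\rho\geq 0$'' to ``$\rho\geq -r$''. Concretely, the extra ramification one is forced to impose at $P_1,P_2$ on $\widetilde E_g$ costs at least $r$ in the Brill-Noether count (one unit for each of the $r+1$ sections beyond the trivial one, roughly), so $\rho(2g-1,r,d)+r<0$ makes the chain of inequalities inconsistent and no limit series exists.

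The main obstacle I expect is making the last step precise: controlling exactly how much the $2$-torsion relation $\mathcal O_{\widetilde E_g}(P_1-P_2)\in\mathrm{Pic}^0[2]$ forces the vanishing sequences on $\widetilde E_g$ to degenerate, and checking that this deficiency is at least $r$ uniformly in $d$. This requires a careful case analysis of which complete linear series of each degree on the elliptic curve $\widetilde E_g$ have $P_1$ and $P_2$ as points of prescribed vanishing order with the $2$-torsion constraint in force — essentially a refined version of the computation Welters carries out for the Prym-Petri kernel, but now tracking ranks rather than just the skew-symmetric obstruction. A cleaner alternative, which I would adopt if the direct chain argument gets unwieldy, is to cite Schwarz's theorem directly: the statement for degree-$2$ \'etale cyclic covers is precisely the case at hand, and then the only thing to verify is that his hypotheses (general $(C,\eta)$, arbitrary $r$ and $d$) match ours and that his bound specializes to $\rho(2g-1,r,d)<-r$ for cyclic degree $2$. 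Given the way the theorem is phrased in the paper — ``follows from \cite{Schwarz}'' — this citation route is almost certainly the intended proof, and the degeneration sketch above is the conceptual explanation of why the bound takes the shape $-r$ rather than $0$.
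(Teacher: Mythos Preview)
Your proposal is correct and matches the paper's approach exactly. The paper does not give a detailed proof either: it cites Schwarz for the general cyclic-cover statement and then remarks that the result ``can be easily proved using Welters' degeneration and applying \cite[Prop.~4.1]{genus23} on the Brill-Noether theory of $2$-pointed elliptic curves.'' The ``main obstacle'' you flag --- quantifying how the $2$-torsion relation on $(\widetilde E_g,P_1,P_2)$ constrains vanishing sequences --- is precisely what that proposition of Farkas supplies.
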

The above result can be easily proved using Welters' degeneration and applying \cite[Prop. 4.1]{genus23} on the Brill-Noether theory of $2$-pointed elliptic curves.
For $r=1$ Theorem \ref{irene} is known to be optimal and this implies that $\widetilde C$ is not Brill-Noether general if $g$ is even. Indeed, the existence of a pencil of degree $g$ on $\widetilde C$ in this case follows from the surjectivity (cf. \cite{Arbarello}) of the difference map
\[
\begin{aligned}
\phi_{\frac{g}{2}} \colon C^{\frac{g}{2}} \times C^{\frac{g}{2}} &\longrightarrow J(C) \\
(D,E) &\longmapsto \mathcal{O}_C(D-E),
\end{aligned}
\]
 yielding $\eta=\mathcal O_C(D-E)$ for some effective divisors $D,E$ both of degree $g/2$ on $C$. The pullback $\pi^*(\mathcal O_C(E))\in \Pic^{g}(\widetilde C)$ then satisfies
 $$
 h^0(\widetilde C, \pi^*\mathcal O_C(E))=h^0(C,\pi_*\pi^*\mathcal O_C(E))= h^0(C, \mathcal O_C(E))+  h^0(C, \mathcal O_C(D))\geq 2,
 $$
 where we have again used the push-pull formula. On the other hand, in the odd genus case the gonality of $\widetilde C$ is maximal, as first proved by Aprodu and Farkas.
\begin{thm}[\cite{generalcovers} Thm. 0.4]
Let $\pi:\tilde C\rightarrow C$ be the \'{e}tale double cover associated with a general Prym curve $(C,\eta)\in \mathcal R_g$. Then the following hold:
\begin{itemize}
\item[(i)] if $g\equiv 1$ mod 2, the curve $\tilde C$ has maximal gonality, that is, $\mathrm{gon}(\tilde C)=g+1$;
\item[(ii)] if $g\equiv 0$ mod 2, then $\mathrm{gon}(\tilde C)=g$.
\end{itemize}
In both cases the Clifford index of $\widetilde C$ equals $\mathrm{gon}(\tilde C)-2$.
\end{thm}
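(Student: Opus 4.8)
The theorem to prove is the Aprodu--Farkas statement about the gonality of $\widetilde C$, for $\pi:\widetilde C\to C$ the étale double cover of a general Prym curve $(C,\eta)\in\mathcal R_g$. My proposal is to argue by degeneration to Welters' limit cover, which is already in play in this section: recall the reduced nodal curve $C_0$ consisting of a chain of $g$ elliptic tails $E_1,\dots,E_g$ joined by rational bridges, with $\eta_0$ trivial on every component except on $E_g$ where it restricts to a nonzero $2$-torsion point, and $\widetilde C_0\to C_0$ the induced étale double cover. The upper bounds $\operatorname{gon}(\widetilde C)\le g+1$ (always, since $\widetilde g=2g-1$) and $\operatorname{gon}(\widetilde C)\le g$ when $g$ is even (exhibited in the excerpt just above, via the difference map $\phi_{g/2}$ and pullback of an effective divisor) are already available, so the entire content is the matching lower bound. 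By semicontinuity of the gonality function under specialization, it suffices to produce a \emph{lower} bound for the gonality of the limit curve $\widetilde C_0$, equivalently to show that $\widetilde C_0$ admits no limit linear series of type $\mathfrak g^1_{g-1}$ (resp. $\mathfrak g^1_{g-2}$ in the even case), using the Eisenbud--Harris theory of limit linear series.

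\textbf{Key steps.} First I would set up the dual graph of $\widetilde C_0$: over each rational bridge of $C_0$ the cover splits into two bridges, over the elliptic tails $E_1,\dots,E_{g-1}$ it splits into two disjoint copies $E_i',E_i''$, and over $E_g$ it is a connected étale double cover $\widetilde E_g\to E_g$, itself an elliptic curve, attached to the rest via the two points $P_1,P_2$ lying over the node, with $\mathcal O_{\widetilde E_g}(P_1-P_2)$ a nonzero $2$-torsion point (as recorded in the excerpt). The second step is the standard compatibility bookkeeping for a limit $\mathfrak g^1_d$ on this curve of compact type (after smoothing the bridges, or by working directly with the chain): assign to each component an aspect $V_i$ with vanishing sequence $(a_0^i,a_1^i)$ at its two nodes, subject to the additivity inequality $a_j^i + a_j^{i+1}\ge d$ at each node, plus the Brill--Noether-type constraint on each elliptic component. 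The third, decisive step is the local analysis at the two elliptic copies over $E_1,\dots,E_{g-1}$ and especially at $\widetilde E_g$: on a general $2$-pointed elliptic curve the only $\mathfrak g^1_d$ with prescribed ramification at the two marked points forces $d\ge$ the sum of ramification plus $1$ unless the difference of the two points is torsion of the relevant order — and here the key point is that $P_1-P_2$ is exactly $2$-torsion, so a $\mathfrak g^1_2$ \emph{can} specialize onto $\widetilde E_g$ but nothing smaller, which is precisely what separates the even case (gonality $g$) from the odd case (gonality $g+1$). Summing the ramification contributions along the chain, one obtains $d\ge g$ always, and $d\ge g+1$ unless the unique "extra" degree can be absorbed at $\widetilde E_g$, which happens only when $g$ is even because of a parity constraint coming from the Wirtinger/Riemann--Mumford count already recalled in Section~\ref{due}. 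The final step is the Clifford index statement: once the gonality is pinned down, $\operatorname{Cliff}(\widetilde C)\le\operatorname{gon}(\widetilde C)-2$ is automatic, and the reverse inequality follows because any linear series computing a smaller Clifford index would, again by specialization to $\widetilde C_0$, contradict the limit-linear-series bounds just established (equivalently, one invokes that $\widetilde C$ is not Brill--Noether special below the Schwarz bound of Theorem~\ref{irene}).

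\textbf{Main obstacle.} The technical heart — and the step I expect to be hardest to make rigorous — is the ramification analysis at $\widetilde E_g$ and at the doubled elliptic tails, because the cover $\widetilde C_0$ is \emph{not} of compact type in the naive sense (over each elliptic tail one gets two disjoint copies, so the dual graph has the topology forced by the cover, not a tree) and Welters' limit-linear-series machinery has to be applied with care to this reducible configuration. One must verify that a putative $\mathfrak g^1_d$ on the smoothing restricts compatibly, track how the $2$-torsion condition $\mathcal O_{\widetilde E_g}(P_1-P_2)$ of order exactly $2$ enters the dimension count on the $2$-pointed elliptic aspect (this is the same phenomenon that makes Welters' proof of the Prym--Petri theorem work, and it is where the even/odd dichotomy is born), and check that no clever distribution of ramification among the two sheets over $E_1,\dots,E_{g-1}$ lowers the total below the claimed bound. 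Everything else — the upper bounds, semicontinuity, and the Clifford-index corollary — is either already in the excerpt or formal.
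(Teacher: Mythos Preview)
This theorem is not proved in the present paper; it is quoted from Aprodu--Farkas \cite{generalcovers}, and the sentence immediately following the statement only records that two proofs are given there, one by degeneration and one by specialization to curves on Nikulin surfaces. So there is no ``paper's own proof'' to compare against beyond that remark. Your proposal follows the first of those two routes.

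Two comments on the sketch itself. First, your stated ``main obstacle'' is not an obstacle: since $\eta_0$ is trivial on every component of $C_0$ other than $E_g$, the cover $\widetilde C_0$ consists of two disjoint copies of the remaining chain, both attached to the connected elliptic cover $\widetilde E_g$. A direct count shows that if $C_0$ has $c$ components and $c-1$ nodes, then $\widetilde C_0$ has $2c-1$ components and $2(c-1)$ nodes, so the dual graph is again a tree; hence $\widetilde C_0$ \emph{is} of compact type and the Eisenbud--Harris machinery applies without modification. Second, and more seriously, the mechanism you propose for the even/odd dichotomy is not correct. The Wirtinger and Riemann--Mumford relations govern the parity of $h^0$ of theta-characteristics and have no bearing on a limit-$\mathfrak g^1_d$ count on $\widetilde C_0$; nothing in your ramification bookkeeping, as written, is sensitive to $g\bmod 2$. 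The combinatorial structure of $\widetilde C_0$---two symmetric arms of $g-1$ elliptic components meeting $\widetilde E_g$, with the $2$-torsion relation $\mathcal O_{\widetilde E_g}(P_1-P_2)\in\Pic^0(\widetilde E_g)[2]$---looks identical for all $g$, and ``summing ramification contributions along the chain'' as you describe would produce the same inequality in both parities. Identifying the genuine source of the dichotomy, which in the degeneration picture requires a finer analysis of which vanishing sequences on the $2$-pointed curve $(\widetilde E_g,P_1,P_2)$ are simultaneously compatible with the aspects on the two arms, is exactly the missing step; without it the odd-genus lower bound $\operatorname{gon}(\widetilde C)\geq g+1$---the only assertion here not already covered by Schwarz's Theorem~\ref{irene}---remains unproved.
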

Two different proofs are provided in \cite{generalcovers}, one by degeneration and one by specialization to curves on Nikulin surfaces; the latter are particular $K3$ surfaces on which we will focus in the next section. Up to our knowledge, the following natural question remains open:
\begin{question}
Let  $g$ be an odd positive integer such that the inequalities $-r\leq \rho(2g-1,r,2g-2)<0$ admit no integral solution $r\geq 1$. If $(C,\eta)\in \mathcal R_g$ is general, is the cover $\tilde C$ Brill-Noether general?
\end{question}

We stress that, quite surprisingly, ramified double covers of curves seem to behave better than étale ones from a Brill-Noether viewpoint. Following \cite{bud}, let $\mathcal R_{g,2n}$ be the moduli space parametrizing irreducible double covers of smooth genus $g$ curves ramified at $2n$ points. The space $\mathcal R_{g,2n}$ can be alternatively defined as follows:
$$
\mathcal R_{g,2n}:=\left\{(C,x_1,\ldots,x_{2n},\eta)\,|\, [C]\in\M_g,\,x_i\in C\,\forall\, i,\,\eta\in\Pic^{-n}(C),\, \eta^2=\mathcal O_C(-x_1-\ldots-x_{2n}) \right\}.
$$
While studying its birational geometry, Bud proved the following astonishing result, which is in contrast to the étale case.
\begin{thm}[\cite{bud} Thm. 4.1]
Let $\pi:\widetilde C\to C$ be the double cover associated with a general $(C,x+y,\eta)\in \mathcal{R}_{g,2}$. Then $\widetilde C$ is both Brill-Noether and Petri general.
\end{thm}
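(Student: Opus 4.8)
The plan is to deduce the statement from a single, well-chosen specialization realized on a $K3$ surface, in the spirit of the rest of this paper. Since $\mathcal{R}_{g,2}$ is irreducible, it suffices to verify that both properties hold for one suitable double cover $\pi\colon\widetilde C\to C$ with $(C,x+y,\eta)\in\mathcal{R}_{g,2}$ --- or, if one prefers to degenerate, for a single nodal limit carrying the appropriate limit-linear-series analogue of Brill--Noether--Petri generality --- after which semicontinuity gives the general statement.

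First I would realize $\pi$ geometrically. I take a Nikulin surface $(S,H)$ of non-standard type with its structure double cover $\pi_S\colon\widehat S\to S$, choose $C$ to be a smooth member of $|R_1|$ (or $|R_2|$), set $\widetilde C:=\pi_S^{-1}(C)$, and identify $\widetilde C$ with its image on the $K3$ surface $\widetilde S$, the minimal model of $\widehat S$. One must check that $\widetilde C\to C$ is then a double cover of a genus $g$ curve ramified at exactly $b=2$ points (for the appropriate choice among $|R_1|$ and $|R_2|$), compute the genus $\widetilde g=2g$ of $\widetilde C$, and --- this is the first delicate point --- verify by a parameter count that, as $(S,H)$ and $C$ vary, these double covers fill a component of $\mathcal{R}_{g,2}$. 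By irreducibility, a general element of $\mathcal{R}_{g,2}$ is then of this form.

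The heart of the matter is to prove that $\widetilde C\subset\widetilde S$ has maximal Clifford index, $\cliff(\widetilde C)=\lfloor(\widetilde g-1)/2\rfloor$. Using the explicit description of $\Pic(\widetilde S)$ from \cite{KLV1} --- the rank-$9$ Nikulin lattice, enlarged by the classes $R_1,R_2$ and the eight $(-2)$-curves of the configuration --- I would rule out, class by class, every effective divisor on $\widetilde S$ cutting out on $\widetilde C$ a linear series of sub-maximal Clifford index. The only obviously special linear series on $\widetilde C$ are the pull-backs $\pi^*A$ of linear series $A$ on the general curve $C$, which have degree at least $2\,\mathrm{gon}(C)$ and hence exceed the Brill--Noether bound for genus $\widetilde g$ --- precisely where the ramified case departs from the \'etale one, in which Aprodu--Farkas \cite{generalcovers} produce a pencil of too-low degree. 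Once maximal Clifford index is established, the fact that $\widetilde C$ lies on a $K3$ surface yields Brill--Noether generality by the Lazarsfeld--Mukai machinery (\cite{Lazarsfeld}, \cite{generalcovers}): every $W^r_d(\widetilde C)$ is empty or of the expected dimension.

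For the Petri statement I would translate injectivity of $\mu_{0,L}$ for all line bundles $L$ on $\widetilde C$ into simplicity of the associated Lazarsfeld--Mukai bundles on $\widetilde S$, and I would also have to exclude the pathology responsible for the failure of Petri-generality in the \'etale case: that $\widetilde C$ carry an effective theta-characteristic with a pencil, for which the base-point-free pencil trick forces $\ker\mu_{0,L}\neq 0$. Here $\omega_{\widetilde C}$ is not the pull-back of a theta-characteristic of $C$ --- it differs from $\pi^*\omega_C$ by the ramification divisor --- so Beauville's construction of invariant vanishing thetanulls recalled in Section \ref{inv} does not apply, and a Riemann--Mumford type count on $\widetilde S$, or a direct lattice computation, should show that $\widetilde C$ has no Petri-violating theta-characteristic and that the relevant Lazarsfeld--Mukai bundles are all simple. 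I expect this combined control of the Brill--Noether and Petri behaviour of a highly non-generic curve on a $K3$ surface of Picard rank $\geq 9$ to be the main obstacle: it is a genuine possibility that for small $g$ the Nikulin lattice harbours an obstructing class, in which case those finitely many low genera would need a separate treatment --- for instance by Bud's original degeneration argument, in the spirit of Welters \cite{Welters}, which also delivers Petri-generality without any simplicity analysis.
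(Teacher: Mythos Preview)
Your Nikulin-surface strategy coincides with what the paper itself does for the Brill--Noether half. The paper states the full theorem as a citation of Bud --- whose proof proceeds via divisor theory on the compactification $\overline{\mathcal{R}_{g,2}}$ --- and then in Proposition~\ref{utile}(ii) reproves only the Brill--Noether part by specialization to Nikulin surfaces of non-standard type, explicitly stopping short of Petri.

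For the Brill--Noether part your detour through the Clifford index and a class-by-class lattice search is unnecessary, and your description of $\Pic(\widetilde S)$ is wrong: the Nikulin lattice lives in $\Pic(S)$, not in $\Pic(\widetilde S)$. By Proposition~\ref{non-standard} one has $\Pic(\widetilde S)=\mathbb{Z}\widetilde R\oplus E_8(-2)$ with $\widetilde D\in|\widetilde R|$; since $E_8(-2)$ is negative definite and orthogonal to $\widetilde R$, the class $\widetilde R$ admits no decomposition $L_1\otimes L_2$ with $h^0(L_i)\geq 2$, and Lazarsfeld's theorem applies at once to \emph{every} smooth curve in $|\widetilde R|$. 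Your parameter count is both wrong and superfluous: Nikulin moduli have dimension $11$, so for $g$ large these covers cannot dominate $\mathcal{R}_{g,2}$; but as you yourself note at the outset, Brill--Noether and Petri generality are open conditions, so one specialization suffices and dominance is irrelevant.

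The genuine gap is the Petri statement. The paper explicitly observes, immediately after Proposition~\ref{utile}, that $\widetilde D$ is \emph{never} general in $|\widetilde R|$, as it lies in the $\iota$-(anti-)invariant part of the linear system; Lazarsfeld's theorem therefore yields Petri generality only for a general member of $|\widetilde R|$, not for $\widetilde D$. Your proposed fix --- simplicity of all Lazarsfeld--Mukai bundles attached to linear series on $\widetilde D$ --- is exactly the hypothesis one cannot verify for a non-general curve in the primitive system, and excluding vanishing thetanulls, while a necessary consistency check, is far from sufficient for Petri generality. In short, the Nikulin specialization recovers only the Brill--Noether half; for the Petri half the paper offers no alternative to Bud's original argument.
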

Bud's proof relies on the study of divisors on a compactification $\overline{\mathcal R_{g,2}}$ of the moduli space.  In the next section we will provide a simpler proof of the Brill-Noether generality of $\widetilde C$ by specialization to ramified double covers of curves on Nikulin surfaces of non-standard type, and we will extend the result to covers ramified at $4$ and $6$ points.

\section{Double covers of curves on Nikulin surfaces}\label{nik}
\subsection{Nikulin surfaces and their Picard group}Nikulin surfaces  represent a rather special class of K3 surfaces, which has
been studied in relation to various topics, including the theory of automorphisms \cite{VGS}, the study of Prym curves \cite{FK} and the birational
geometry of their moduli spaces \cite{FV1,FV,KLV1,KLV2}. We recall their definition and some basic properties.
\begin{df} \label{def:Nik}
  A polarized  {\it Nikulin} surface of genus $h \geq 2$ is a triple $(S,M,H)$ consisting of 
 a smooth $K3$ surface $S$ and two line bundles $\mathcal{O}_S(M),H \in \Pic S$ that satisfy the following conditions:
\begin{itemize}
\item $S$ contains $8$ disjoint smooth rational curves $N_1,\ldots,N_8$ such that $$ N_1+\cdots+N_8 \sim 2M. $$
\item $H$ is nef, $H^2=2(h-1)$ and  $H \cdot M=0$.
\end{itemize}
We say that $(S,M,H)$ is {\it primitively polarized} if the class of $H$ is primitive in $\Pic S$.
\end{df}
The line bundle $M$ defines a non-trivial double cover $\pi:\widehat{S}\rightarrow S$ branched along $\sum_{i=1}^8N_i$. Denoting by $\sigma:\widehat{S}\rightarrow \widetilde S$ the blow-down of the eight $(-1)$-curves $E_i:=\pi^{-1}(N_i)\subset \widehat{S}$, the surface $\widetilde S$ is a minimal $K3$ surface endowed with a so-called {\em Nikulin involution} $\iota \in \mathrm{Aut}(\widetilde S)$ having eight fixed points corresponding to the images $\sigma(E_i)$ of the exceptional divisors. The quotient $\bar S:=\widetilde S/\iota$ has eight ordinary double points and the quotient map $\bar \pi: \widetilde S\to \bar S$  fits in the following commutative diagram:

$$\xymatrix{ \widehat{S} \ar[d]_{\pi}\ar[r]^{\sigma}  & \widetilde{S} \ar[d]^{\bar{\pi}}\\
             S  \ar[r]_{\bar{\sigma}}& \bar{S}},$$
where $\bar \sigma$ is the contraction of the curves $N_i$ to the eight nodes of $\bar S$.

\begin{df} \label{def:Nik2}
Let $(S,M,H)$ be a polarized Nikulin surface of genus $h$. The rank $8$ sublattice of $\Pic S$ generated by $N_1,\ldots,N_8$ and $M$ is called {\it Nikulin lattice} and its denoted by $\mathbf{N}=\mathbf{N}(S,M)$.
\end{df} 
Since the Picard group of a polarized Nikulin surface $(S,M,H)$ contains both the Nikulin lattice and the polarization $H$, its rank is at least $9$. Consider the rank $9$ lattice
\[ \Lambda_h=\Lambda(S,M,H):= \ZZ[H] \oplus_{\perp} \mathbf{N} \subset \Pic S;\] 
the surface $S$ is said to be a {\it Nikulin surface of standard type} if the embedding $\Lambda_h \subset \Pic S$ is primitive, and  a {\it Nikulin surface of non-standard type} otherwise. Garbagnati and Sarti \cite{GS} proved that in the latter case $h$ is forced to be odd. The following result describes $\Pic(S)$ in the case of minimal Picard number.
\begin{prop}[\cite{GS}, Prop. 2.1]
Let $(S,M,H)$ be a genus $h$ primitively polarized Nikulin surface of Picard number $9$. Then either $\Pic S= \Lambda_h$ (standard case), or $h$ is odd and $\Lambda_h \subset \Pic S$ has index two (non-standard case). In the latter situation, possibly after renumbering the curves $N_i$, one falls in one of these cases:
\begin{itemize}
\item $h \equiv 3\,\mathrm{mod}\, 4$ and there are  $R_1, R_2 \in \Pic S$ such that $$R_1\sim \frac{H-N_1-N_2}{2},\,\,\,R_2\sim\frac{H-N_3-\cdots-N_8 }{2};$$ 
in particular, one has $g(R_1)=(h+1)/4$, $g(R_2)=(h-3)/4$.
\item  $h \equiv 1\,\mathrm{mod}\, 4$ and there are  $R_1, R_2 \in \Pic S$ such that $$R_1\sim \frac{H-N_1-N_2-N_3-N_4}{2},\,\,\,R_2\sim \frac{H-N_5-N_6-N_7-N_8}{2};$$
in particular, one has $g(R_1)=g(R_2)=(h-1)/4$.
\end{itemize}
\end{prop}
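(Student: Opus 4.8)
The plan is to prove the statement as a pure exercise in lattice theory, via Nikulin's correspondence between finite-index even overlattices of a lattice $\Lambda$ and isotropic subgroups of its discriminant form $(A_\Lambda,q_\Lambda)$, where $q_\Lambda$ takes values in $\mathbb{Q}/2\ZZ$. Here $\Lambda=\Lambda_h=\ZZ H\oplus_{\perp}\mathbf{N}$ sits inside $\Pic S$ with the same rank $9$, so $\Pic S$ is a finite even overlattice of $\Lambda_h$; let $G\subseteq A_{\Lambda_h}$ be the corresponding isotropic subgroup, so that $[\Pic S:\Lambda_h]=|G|$. Since $A_{\Lambda_h}=A_{\ZZ H}\oplus A_{\mathbf{N}}$ with $A_{\ZZ H}\cong\ZZ/(2h-2)$ (generated by the class of $\tfrac{1}{2h-2}H$) and $A_{\mathbf{N}}\cong(\ZZ/2)^{6}$, the first step is to pin down the possible $G$.

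The crucial point is the bound $|G|\le 2$. Since $H$ is primitive in $\Pic S$ by hypothesis, $\ZZ H$ is a primitive rank-one sublattice, which is equivalent to $G\cap A_{\ZZ H}=0$; hence the projection $G\to A_{\mathbf{N}}$ is injective and $G$ is an elementary abelian $2$-group. On the other hand the Nikulin lattice $\mathbf{N}$ is primitively embedded in $\Pic S$, a structural feature of Nikulin surfaces (\cite{VGS,GS}), so likewise $G\cap A_{\mathbf{N}}=0$ and the projection $G\to A_{\ZZ H}\cong\ZZ/(2h-2)$ is injective, which forces $G$ to be cyclic. A group that is at once cyclic and $2$-elementary has order $1$ or $2$. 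If $|G|=1$ then $\Pic S=\Lambda_h$, the standard case; the remaining work is to analyze $|G|=2$.

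When $|G|=2$, the nonzero element of $G$ is the class of some $w\in\Lambda_h^{\vee}$ with $2w\in\Lambda_h$, $w\notin\Lambda_h$, and by the two injectivity statements above its $A_{\ZZ H}$- and $A_{\mathbf{N}}$-components are both nonzero. Clearing denominators one may write $w\equiv\tfrac12\bigl(H+\sum_{i\in J}N_i\bigr)$ modulo $\Lambda_h$ for a subset $J\subseteq\{1,\dots,8\}$ with $J\neq\emptyset$ and $J\neq\{1,\dots,8\}$; note that $\tfrac12(H-\sum_{i\in J}N_i)$ differs from $w$ by $\sum_{i\in J}N_i\in\mathbf{N}$, and that replacing $J$ by its complement changes $w$ by $M-\sum_{i\in J}N_i\in\mathbf{N}$, so both describe the same overlattice. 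Integrality of $w\cdot M=-\tfrac12|J|$ forces $|J|$ even, and evenness of $w^{2}=\tfrac14\bigl((2h-2)-2|J|\bigr)=\tfrac12(h-1-|J|)$ forces $|J|\equiv h-1\pmod{4}$; since $|J|$ is even this already shows $h$ must be odd (recovering the assertion of \cite{GS} in the non-standard case), and moreover $|J|\in\{2,4,6\}$. If $h\equiv 3\pmod{4}$ then $|J|\equiv 2\pmod{4}$, so up to complement and renumbering $J=\{1,2\}$, $\Pic S$ is generated over $\Lambda_h$ by $R_1:=\tfrac12(H-N_1-N_2)$, and $R_2:=\tfrac12(H-N_3-\cdots-N_8)=R_1+N_1+N_2-M$ also lies in $\Pic S$; the adjunction formula on the $K3$ surface $S$, together with $H\cdot N_i=0$ and $N_i\cdot N_j=-2\delta_{ij}$, gives $g(R_1)=1+R_1^{2}/2=(h+1)/4$ and $g(R_2)=(h-3)/4$. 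If $h\equiv 1\pmod{4}$ then $|J|\equiv 0\pmod{4}$, so $|J|=4$ and, after renumbering, $J=\{1,2,3,4\}$ with complement $\{5,6,7,8\}$; one obtains $R_1:=\tfrac12(H-N_1-N_2-N_3-N_4)$ and $R_2:=\tfrac12(H-N_5-N_6-N_7-N_8)$ in $\Pic S$, both of genus $(h-1)/4$ by the same computation.

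The only input that is not formal lattice bookkeeping is the primitivity of the Nikulin lattice $\mathbf{N}$ inside $\Pic S$: this is exactly what makes $G$ cyclic and hence yields the index bound $|G|\le 2$, and I expect this to be the step requiring the most care to pin down cleanly, as it rests on the geometry of the double cover $\widehat S\to S$. The remaining subtleties are purely combinatorial, namely choosing the sign and the complement so as to present the generator in the normalized form $\tfrac12(H-\sum_{i\in J}N_i)$ of the statement, and checking that $|J|\in\{0,8\}$ cannot occur — which is clear, since it would force $\tfrac12 H\in\Pic S$, contradicting the primitivity of $H$.
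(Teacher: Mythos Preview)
The paper does not prove this proposition: it is quoted verbatim from \cite[Prop.~2.1]{GS} and used as a black box, so there is no ``paper's own proof'' to compare against. Your argument is correct and is exactly the standard route---Nikulin's bijection between even overlattices and isotropic subgroups of the discriminant form, combined with the two primitivity constraints (of $H$ and of $\mathbf{N}$) to force $|G|\le 2$, followed by the parity/isotropy computation to pin down $|J|$. All the numerics check out, including the genus computations via adjunction.

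The one point you flag yourself deserves emphasis: the primitivity of $\mathbf{N}$ in $\Pic S$ is not a consequence of Definition~\ref{def:Nik} as stated in this paper, which only asks for eight disjoint $(-2)$-curves with $2$-divisible sum. It is rather part of the package one gets from the Nikulin-involution picture (the anti-invariant lattice $E_8(-2)\subset H^2(\widetilde S,\ZZ)$ is primitive, and this transfers to primitivity of $\mathbf{N}$ on the quotient side). This is how \cite{VGS,GS} set things up, so your citation is appropriate; just be aware that with the bare definition given here one would need to supply that step. Everything else in your write-up is self-contained lattice bookkeeping.
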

The growing interest in Nikulin surfaces is motivated from the fact that some Prym curves live on them. The condition $H\cdot M=0$ in Definition \ref{def:Nik} ensures that the double cover $\pi$ restricts to an étale double cover $\pi|_C:\widetilde{C}:=\pi^{-1}(C)\to C$ of any smooth curve $C\in |H|$; in other words, the pair $(C,M|_C)$ is a Prym curve of genus $h$. Since $\widetilde C$ is disjoint from the ($-1$)-curves $E_i$, the curve $\widetilde C$ can be identified with its image in $\widetilde S$ and we set $\widetilde H:=\mathcal O_{\widetilde S}(\widetilde C)$.

The Picard group of the $K3$ surface $\widetilde S$ was described by Van Geemen and Sarti  \cite{VGS} by investigating the action of the Nikulin involution $\iota$ on the cohomology group $H^2(\widetilde S,\mathbb Z)$. It turns out that $\Pic(\widetilde S)$ always contains the orthogonal complement $(H^2(\widetilde S,\mathbb Z)^\iota)^\perp$, which is isomorphic to the rank $8$ lattice $E_8(-2)$. Since $\widetilde H\in E_8(-2)^\perp\subset \Pic(\widetilde S)$, the Picard number of $\widetilde S$ is $\geq 9$ and equality holds if and only if the same holds for $S$. The following proposition summarizes results by Van Geemen and Sarti \cite[Prop. 2.2 and Prop. 2.7]{VGS} and Aprodu and Farkas \cite[Prop. 4.3]{generalcovers} in the standard case.
\begin{prop}\label{standard}
Let $(S,M,H)$ be a genus $h$ polarized Nikulin surface of standard type and Picard number $9$, and let $\widetilde S$ be the $K3$ surface with a Nikulin involution obtained from $S$. Then, the lattice
$$
\widetilde{\Lambda}_h:=\mathbb Z \widetilde H\oplus E_8(-2)$$
has index $2$ in $\Pic(\widetilde S)$. The latter is generated by $\widetilde{\Lambda}_h$ and a class $\frac{\widetilde H+v}{2}$, where $v$ is an element of $E_8(-2)$ satisfying $v^2=-4$ if $h$ is even and $v^2=-8$ if $h$ is odd. 
\end{prop}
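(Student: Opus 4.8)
The plan is to combine one elementary intersection-number computation with Nikulin's classification of even overlattices; the single genuinely global ingredient is the explicit description of the $\iota$-equivariant lattice $H^2(\widetilde S,\ZZ)$ due to Van Geemen--Sarti, which is already invoked above. First I would compute $\widetilde H^2$. Since $\widetilde C=\pi^{-1}(C)$ is disjoint from the branch curves $N_i$, it misses the $(-1)$-curves $E_i\subset\widehat S$, so the blow-down $\sigma$ is an isomorphism near $\widetilde C$ and $\sigma^*\widetilde H=\pi^*H$; hence $\widetilde H^2=(\sigma^*\widetilde H)^2=(\pi^*H)^2=(\deg\pi)\,H^2=4(h-1)$. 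Next I would use the analysis of $\iota$ acting on $H^2(\widetilde S,\ZZ)$: the anti-invariant lattice $(H^2(\widetilde S,\ZZ)^\iota)^\perp\cong E_8(-2)$ lies primitively in $\Pic(\widetilde S)$, the class $\widetilde H$ is $\iota$-invariant (hence $\widetilde H\perp E_8(-2)$) and primitive in $\Pic(\widetilde S)$, and that same description shows the inclusion $\widetilde{\Lambda}_h=\ZZ\widetilde H\oplus E_8(-2)\subseteq\Pic(\widetilde S)$ is \emph{strict}. As the Picard number of $S$, hence of $\widetilde S$, equals $9$ by hypothesis, both lattices have rank $9$, so $\Pic(\widetilde S)$ is a nontrivial finite-index even overlattice of $\widetilde{\Lambda}_h$.

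Then I would run Nikulin's correspondence between even overlattices of $\widetilde{\Lambda}_h$ and isotropic subgroups of the discriminant group $A_{\widetilde{\Lambda}_h}=A_{\ZZ\widetilde H}\oplus A_{E_8(-2)}\cong\ZZ/4(h-1)\oplus(\ZZ/2)^8$ equipped with its discriminant form $q$: let $G\neq 0$ be the isotropic subgroup corresponding to $\Pic(\widetilde S)$. Primitivity of both $\ZZ\widetilde H$ and $E_8(-2)$ in $\Pic(\widetilde S)$ forces $G$ to meet neither direct summand of $A_{\widetilde{\Lambda}_h}$, so both projections $G\to A_{\ZZ\widetilde H}$ and $G\to A_{E_8(-2)}$ are injective. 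The second injection makes $G$ a $2$-elementary group and the first makes it cyclic, so $G\cong\ZZ/2$; its generator must be $\overline{\widetilde H/2}+\overline{v/2}=\overline{(\widetilde H+v)/2}$ for some $v\in E_8(-2)\setminus 2E_8(-2)$, because $\overline{\widetilde H/2}$ is the unique element of order two in $A_{\ZZ\widetilde H}$. Hence $\Pic(\widetilde S)=\widetilde{\Lambda}_h+\ZZ\,\tfrac{\widetilde H+v}{2}$ is an index-two overlattice of $\widetilde{\Lambda}_h$, as claimed.

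It remains to pin down $v^2$, which I would do from the evenness of $\Pic(\widetilde S)$, i.e.\ from isotropy of $G$. One has $q\bigl(\tfrac{\widetilde H+v}{2}\bigr)=\tfrac{\widetilde H^2}{4}+\tfrac{v^2}{4}\equiv(h-1)+\tfrac{v^2}{4}\pmod{2\ZZ}$, which must vanish; since $E_8$ is even, $v^2\in 4\ZZ$ for every $v\in E_8(-2)$, so the vanishing forces $v^2\equiv 4\pmod 8$ for $h$ even and $v^2\equiv 0\pmod 8$ for $h$ odd. As $v$ is well-defined only modulo $2E_8(-2)$, I may take it of minimal norm in its class in $E_8(-2)/2E_8(-2)\cong\mathbb F_2^{8}$; using the standard facts that every class of $E_8/2E_8$ with odd half-norm contains a root and every nonzero class with even half-norm contains a vector of $E_8$-norm $4$, this minimal representative satisfies $v^2=-4$ when $h$ is even and $v^2=-8$ when $h$ is odd. (For $h$ even, primitivity of $\widetilde H$ in $\Pic(\widetilde S)$ is automatic: $\widetilde H=2D$ would give $D^2=h-1$ odd, impossible in an even lattice; for $h$ odd it is part of the Van Geemen--Sarti and Aprodu--Farkas input.)

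I expect the main obstacle to be exactly this last point: the strict inclusion $\widetilde{\Lambda}_h\subsetneq\Pic(\widetilde S)$, together with the primitivity of $\widetilde H$ when $h$ is odd. Pure lattice theory does not decide it, since Nikulin's existence criterion shows that $\widetilde{\Lambda}_h$ is abstractly realizable as the Picard lattice of some K3 surface; one really needs the geometry of the Nikulin construction — the explicit $\iota$-equivariant Hodge structure on $H^2(\widetilde S,\ZZ)$ of Van Geemen--Sarti, or equivalently the explicit divisor class on $\widetilde S$ exhibited by Aprodu--Farkas.
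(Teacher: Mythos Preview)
The paper does not prove this proposition: it is stated as a summary of results of Van Geemen--Sarti \cite[Prop.~2.2 and~2.7]{VGS} and Aprodu--Farkas \cite[Prop.~4.3]{generalcovers}, with no argument given. Your proposal is thus not being compared against a proof in the paper, but rather reconstructs the lattice-theoretic content behind those references.

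Your reconstruction is correct. The computation $\widetilde H^{2}=4(h-1)$, the identification of the discriminant group $A_{\widetilde\Lambda_h}\cong\ZZ/4(h-1)\oplus(\ZZ/2)^8$, and the use of Nikulin's overlattice correspondence are all sound. The step deducing $G\cong\ZZ/2$ from the two primitivity assumptions (so that both projections of $G$ are injective, forcing $G$ to be both $2$-elementary and cyclic) is clean, and the parity analysis of $q\bigl((\widetilde H+v)/2\bigr)$ together with the minimal-representative argument in $E_8/2E_8$ (anisotropic classes contain roots, nonzero isotropic classes contain norm-$4$ vectors) correctly pins down $v^2\in\{-4,-8\}$ according to the parity of $h$.

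You have also correctly isolated the one point that cannot be settled by pure lattice theory: the strictness of the inclusion $\widetilde\Lambda_h\subsetneq\Pic(\widetilde S)$ and, for odd $h$, the primitivity of $\widetilde H$. These are precisely the geometric inputs for which the paper invokes \cite{VGS} and \cite{generalcovers}; your final paragraph shows you understand why an abstract lattice argument cannot supply them. So your proof is complete modulo exactly the same citations the paper relies on, and is in effect a fleshed-out version of what the cited references do.
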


We now turn to the non-standard case. By \cite[Prop. 3.5 (2)]{GS}, for $i=1,2$ the linear system $|R_i|$ contains a smooth irreducible curve as soon as $R_i$ has nonnegative self-intersection. We stress that $R_1\cdot M=2$ and $R_2\cdot M=6$ if $h \equiv 3\,\mathrm{mod}\, 4$, while $R_1\cdot M=R_2\cdot M=4$ for $h \equiv 1\,\mathrm{mod}\, 4$. Therefore, for all smooth curves $D_1\in |R_1|$ and $D_2\in |R_2|$, the cover $\pi$ induces double covers $\pi_{D_1}:\pi^{-1}(D_1)\to D_1$ and $\pi_{D_2}:\pi^{-1}(D_2)\to D_2$ that are ramified at $2$ and $6$ points, respectively, if $h \equiv 3\,\mathrm{mod}\, 4$; on the other hand, both $\pi_{D_1}$ and $\pi_{D_2}$ are ramified at $4$ points if $h \equiv 1\,\mathrm{mod}\, 4$. In any case, the curves $\pi^{-1}(D_1)$ and $\pi^{-1}(D_2)$ meet each exceptional curve $E_i$ in at most $1$ point and are thus isomorphic to their images in $\widetilde S$, that we denote by $\widetilde D_1 $ and $\widetilde D_2$ respectively. It is trivial to check that 
\begin{align}\label{pasqua}
\sigma^*\widetilde D_1-E_1-E_2\sim\pi^*D_1,\,\,\,\sigma^*\widetilde D_2-E_3-\cdots-E_8\sim\pi^*D_2\textrm{ if }h \equiv 3\,\mathrm{mod}\, 4,\\ 
\nonumber\sigma^*\widetilde D_1-E_1-E_2-E_3-E_4\sim\pi^*D_1,\,\,\,\sigma^*\widetilde D_2-E_5-E_6-E_7-E_8\sim\pi^*D_2\textrm{ if } h\equiv 1\,\mathrm{mod}\, 4. 
\end{align}
Note that Hurwitz's formula always yields $g(\widetilde D_1)=g(\widetilde D_2)=(h+1)/2$.\\
The following result directly follows from \cite[Prop. 2.2 and Prop. 2.7]{VGS} in the non-standard case.
\begin{prop}\label{non-standard}
Let $(S,M,H)$ be a genus $h$ polarized Nikulin surface of non-standard type and Picard number $9$, and let $\widetilde S$ be the $K3$ surface with a Nikulin involution obtained from $S$. Then
$$
\Pic(\widetilde S)=\mathbb Z \widetilde R\oplus E_8(-2),
$$
where $\widetilde R$ is a polarization of genus $(h+1)/2$ such that the curves $\widetilde D_1,\widetilde D_2$ as in \eqref{pasqua} lie in $|\widetilde R|$. 
\end{prop}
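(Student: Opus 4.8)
The plan is to pin down $\Pic(\widetilde S)$ by combining the explicit linear equivalences \eqref{pasqua} with the $\iota$-equivariant structure of $\Pic(\widetilde S)$ recalled above, and to reduce the whole statement to the primitivity of $H$ in $\Pic(S)$. Let $\tau$ be the covering involution of $\pi\colon\widehat S\to S$, so that $\sigma\circ\tau=\iota\circ\sigma$. First I would extract from \eqref{pasqua} and the formulas for $R_1,R_2$ in \cite[Prop. 2.1]{GS} the needed consequences, using the standard identities $\pi^*N_i=2E_i$, $\pi_*E_i=N_i$, $\pi_*\pi^*=2\,\mathrm{id}$, $\pi^*\pi_*=\mathrm{id}+\tau^*$, $\sigma^*\widetilde H=\pi^*H$, and the injectivity of $\sigma^*\colon\Pic(\widetilde S)\hookrightarrow\Pic(\widehat S)$ and of $\pi^*\colon\Pic(S)\hookrightarrow\Pic(\widehat S)$ (the latter because $\Pic(S)$ is torsion-free and $\pi_*\pi^*=2$). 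Applying $\sigma^*$ to $2\widetilde D_1$ and using $2R_1\sim H-N_1-N_2$ (resp.\ its $h\equiv 1\bmod 4$ analogue) gives $\sigma^*(2\widetilde D_1)=\pi^*(2D_1+N_1+N_2)=\pi^*H=\sigma^*\widetilde H$, hence $\widetilde H\sim 2\widetilde D_1$; the identical computation gives $\widetilde H\sim 2\widetilde D_2$, whence $2([\widetilde D_1]-[\widetilde D_2])\sim 0$ and, $\Pic(\widetilde S)$ being torsion-free, $\widetilde D_1\sim\widetilde D_2$. Set $\widetilde R:=[\widetilde D_1]=[\widetilde D_2]$; then $\widetilde H\sim 2\widetilde R$, so $4\widetilde R^2=\widetilde H^2=4(h-1)$, i.e.\ $g(\widetilde R)=(h+1)/2$, and pushing forward, $\pi_*\sigma^*\widetilde R=\pi_*\pi^*D_1+\pi_*E_1+\pi_*E_2=2D_1+N_1+N_2=H$.

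Next I would bring in the quoted results of van Geemen--Sarti. By \cite[Prop. 2.2]{VGS} the hypothesis $\rho(S)=9$ forces $\rho(\widetilde S)=9$; since $\Pic(\widetilde S)$ contains $E_8(-2)=(H^2(\widetilde S,\mathbb Z)^\iota)^\perp$, which is exactly the $\iota$-anti-invariant part $\Pic(\widetilde S)^-$, the invariant part $\Pic(\widetilde S)^+$ has rank $1$. Now $\widetilde R$ is $\iota$-invariant (as $\widetilde H\sim 2\widetilde R$ is and $\Pic$ is torsion-free), so $\widetilde R\in\Pic(\widetilde S)^+$; writing $\Pic(\widetilde S)^+=\mathbb Z w$ with $\widetilde R=m w$ and applying $\pi_*\sigma^*$ gives $m\,(\pi_*\sigma^*w)=H$, so primitivity of $H$ in $\Pic(S)$ forces $m=1$ and $\Pic(\widetilde S)^+=\mathbb Z\widetilde R$. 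Consequently $\mathbb Z\widetilde R\oplus E_8(-2)$ is an orthogonal, finite-index sublattice of $\Pic(\widetilde S)$, and the index divides $2$ because for every $x\in\Pic(\widetilde S)$ one has $2x=(x+\iota^*x)+(x-\iota^*x)\in\Pic(\widetilde S)^+\oplus\Pic(\widetilde S)^-$.

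It remains to exclude the index-$2$ case. If it occurred, then after subtracting a suitable multiple of $\widetilde R$ a generator of the quotient could be taken of the form $x=\tfrac12(\widetilde R+v)$ with $v\in E_8(-2)$. Since $\pi_*\sigma^*$ annihilates anti-invariant classes --- indeed $\pi^*\pi_*(\sigma^*v)=\sigma^*v+\tau^*\sigma^*v=\sigma^*v+\sigma^*\iota^*v=\sigma^*v-\sigma^*v=0$ and $\pi^*$ is injective --- applying $\pi_*\sigma^*$ to $2x=\widetilde R+v$ would yield $2\,(\pi_*\sigma^*x)=\pi_*\sigma^*\widetilde R+\pi_*\sigma^*v=H$, contradicting the primitivity of $H$ in $\Pic(S)$. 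Hence $\Pic(\widetilde S)=\mathbb Z\widetilde R\oplus E_8(-2)$, with $\widetilde D_1,\widetilde D_2\in|\widetilde R|$ and $g(\widetilde R)=(h+1)/2$, as claimed. I expect the one genuinely delicate point to be keeping track of the intertwining $\sigma\circ\tau=\iota\circ\sigma$ and the induced behaviour of $\pi_*\sigma^*$ on the $\pm1$-eigenspaces of $\iota$; this is also exactly where the non-standard hypothesis is used, since it is the forced $2$-divisibility $\widetilde H\sim 2\widetilde R$, coming from the index-two inclusion $\Lambda_h\subset\Pic(S)$, that makes the final contradiction bite --- in the standard case $\widetilde H$ itself is primitive in $\Pic(\widetilde S)^+$ and the analogous computation only produces $2\,(\pi_*\sigma^*x)=2H$, which is harmless, consistently with Proposition \ref{standard}.
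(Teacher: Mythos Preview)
Your argument is correct. Note that the paper does not actually supply a proof of this proposition: it simply records that the statement ``directly follows from \cite[Prop.~2.2 and Prop.~2.7]{VGS} in the non-standard case.'' So there is nothing to compare against except the cited source; what you have written is essentially an unpacking of those lattice-theoretic results in the specific non-standard setting.

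A couple of remarks. First, your argument hinges on the primitivity of $H$ in $\Pic(S)$, which is not stated verbatim in the proposition but is implicit: the description of the non-standard Picard group you invoke from \cite[Prop.~2.1]{GS} is formulated for primitively polarized Nikulin surfaces, and with Picard number $9$ this forces $H$ primitive. It would be worth saying this explicitly. Second, in the index-$2$ reduction you should note why the coefficient of $\widetilde R$ in $2x$ must be odd: if it were even, then after subtracting the appropriate multiple of $\widetilde R$ one would get $v/2\in\Pic(\widetilde S)^-$, but you have already identified $\Pic(\widetilde S)^-=E_8(-2)$, so $x$ would lie in $\mathbb Z\widetilde R\oplus E_8(-2)$, a contradiction. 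You gesture at this (``after subtracting a suitable multiple of $\widetilde R$''), but it is the step that uses $\Pic(\widetilde S)^-=E_8(-2)$ and deserves a sentence.

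Your closing observation is exactly the point: in the standard case $\widetilde H$ is itself primitive in $\Pic(\widetilde S)^+$, the analogous computation only gives $2\,\pi_*\sigma^*x=2H$, and the index-$2$ overlattice survives, in agreement with Proposition~\ref{standard}. So your proof not only establishes the non-standard statement but also transparently explains the dichotomy between the two cases, which the bare citation in the paper does not.
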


\subsection{Standard Nikulin surfaces}
As highlighted in \cite[Prop. 2.3]{KLV1}, a general curve $C\in |H|$ on a very general genus $h$ primitively polarized Nikulin surface $(S,M,H)$ of standard type is Brill-Noether general . Furthermore, it was proved by Aprodu and Farkas \cite[Thm. 1.5]{generalcovers} that the étale double cover $\widetilde C\subset \widetilde S$ of $C$ has the gonality of a general curve of genus $2h-1$ that covers a genus $h$ curve, namely, $h+1$ if $h$ is odd and $h$ otherwise. It is thus natural to ask whether $\widetilde C$ is general from a Prym-Brill-Noether viewpoint, that is, if it satisfies Welters' Theorem. A positive answer would provide an alternative proof of Welters' result avoiding degeneration, in analogy with Lazarsfeld's proof of the Gieseker-Petri Theorem by specialization to curves on $K3$ surfaces. Unfortunately, the answer turns out to be negative as soon $h>7$ and $h=6$. This bound on the genus agrees with the following theorem by Farkas and Verra.
\begin{thm}[\cite{FV}, Thm. 0.2]
A general Prym curve $(C, \eta) \in \mathcal{R}_h$ lies on a Nikulin surface if and only if $h\leq 7$ and $h\neq 6$.
\end{thm}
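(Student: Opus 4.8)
The plan is to reinterpret the theorem as the dominance of a natural rational map $m_h$ from a space of (Nikulin surface, curve)-pairs to $\mathcal{R}_h$: the elementary bound $h\leq 7$ will come from a dimension count, the exclusion of $h=6$ from an extra Prym--Brill--Noether constraint forced by Proposition \ref{standard}, and the dominance for the remaining genera from an infinitesimal computation.

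Let $\mathcal{F}_h$ be the moduli space of genus $h$ primitively polarized Nikulin surfaces $(S,M,H)$ of standard type; as a lattice-polarized family whose general member has $\Pic S=\Lambda_h$ of rank $9$, it has $\dim\mathcal{F}_h=20-9=11$. Since $\dim|H|=\tfrac{H^2}{2}+1=h$, the incidence variety $\mathcal{P}_h:=\{(S,C)\,:\,[S]\in\mathcal{F}_h,\ C\in|H|\text{ smooth}\}$ has dimension $11+h$, and the condition $H\cdot M=0$ makes $\pi_S|_C\colon\widetilde C\to C$ \'etale, so $(C,M|_C)\in\mathcal{R}_h$ and we obtain a rational map $m_h\colon\mathcal{P}_h\lra\mathcal{R}_h$, $(S,C)\mapsto(C,M|_C)$. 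The non-standard families exist only for odd $h$ and likewise have dimension $11$, so they enlarge neither the source nor the image. The assertion to be proved is exactly that $m_h$ is dominant; as $\dim\mathcal{R}_h=3h-3$, dominance forces $11+h\geq 3h-3$, that is $h\leq 7$. This is half of the ``only if'' direction.

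The heart of the ``only if'' direction is the exclusion of $h=6$, where the inequality $17\geq 15$ holds. Genus $6$ is even, so by Garbagnati--Sarti \cite{GS} every genus $6$ Nikulin surface is of standard type; for a general member of $\mathcal{P}_6$ the $K3$ surface $\widetilde S$ has Picard number $9$, and Proposition \ref{standard} furnishes a class $L:=\tfrac12(\widetilde H+v)\in\Pic(\widetilde S)$ with $v\in E_8(-2)$, $v^2=-4$. Using $\widetilde H^2=2\widetilde g-2=20$ and $\widetilde H\cdot v=0$ one finds $L^2=4$ and $L\cdot\widetilde H=10$; hence $L$ is effective with $h^0(\widetilde S,L)\geq\chi(L)=2+\tfrac{L^2}{2}=4$, and since $L-\widetilde H$ meets the nef class $\widetilde H$ negatively it is ineffective, giving $h^0(\widetilde C,L|_{\widetilde C})\geq h^0(\widetilde S,L)\geq 4$. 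As $\iota^*$ fixes $\widetilde H$ and acts as $-1$ on $E_8(-2)$, we get $\iota^*L=\tfrac12(\widetilde H-v)$ and so $L\otimes\iota^*L=\widetilde H$; restricting to $\widetilde C$ and using $\widetilde H|_{\widetilde C}=\omega_{\widetilde C}$ yields $L|_{\widetilde C}\otimes\iota^*(L|_{\widetilde C})\simeq\omega_{\widetilde C}$, so $L|_{\widetilde C}\in\mathrm{Nm}^{-1}(\omega_C)$. Writing $r:=h^0(\widetilde C,L|_{\widetilde C})-1\geq 3$, the point $L|_{\widetilde C}$ lies in $V^r(C,M|_C)$; equivalently, the Prym--Petri map of $L|_{\widetilde C}$, with domain $\wedge^2 H^0(\widetilde C,L|_{\widetilde C})$ of dimension $\geq\binom{4}{2}=6$ strictly larger than $\dim H^0(C,\omega_C(\eta))=h-1=5$, has a nonzero kernel. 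For a general Prym curve this cannot occur: $\rho^-(6,r)\leq\rho^-(6,3)=6-1-\binom{4}{2}=-1<0$ forces $V^r=\emptyset$ by Theorem \ref{Prym}. Hence the image of $m_6$ lies in a proper closed subvariety of $\mathcal{R}_6$ and $m_6$ is not dominant, completing the ``only if'' direction.

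It remains to prove the ``if'' direction, namely that $m_h$ is dominant for $h\in\{2,3,4,5,7\}$. By generic smoothness it suffices to exhibit one pair $(S,C)$ at which the differential $dm_h\colon T_{(S,C)}\mathcal{P}_h\to T_{(C,M|_C)}\mathcal{R}_h=H^1(C,T_C)$ is surjective. I would compute $dm_h$ from the Kodaira--Spencer maps of the two factors of $\mathcal{P}_h$ and identify its cokernel, dually, with the kernel of an explicit Gaussian-type multiplication map attached to $C$ and the Nikulin data, in the manner of the theory of Gaussian maps of curves on $K3$ surfaces; the Brill--Noether generality of $C$ on a general Nikulin surface (\cite{KLV1}) together with lattice-theoretic vanishing on $\widetilde S$ should pin this kernel to the expected dimension $\max\{0,14-2h\}$, which is the fibre dimension of $m_h$ and yields surjectivity. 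The main obstacle is the boundary case $h=7$, where $\dim\mathcal{P}_7=\dim\mathcal{R}_7=18$ and $dm_7$ must be an isomorphism with no slack; here the class $L$ of Proposition \ref{standard} has $v^2=-8$ and gives $L|_{\widetilde C}\in V^3$ with $\rho^-(7,3)=7-1-\binom{4}{2}=0$, so it realizes exactly the expected Prym--Brill--Noether behaviour rather than an extra constraint. This contrast is precisely the mechanism underlying both the dominance at $h=7$ and its failure at $h=6$.
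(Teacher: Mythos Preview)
The paper does not prove this theorem; it is quoted verbatim from Farkas--Verra \cite{FV} as background for Theorem~\ref{Nikulin}. So there is no ``paper's own proof'' to compare against, and your write-up should be assessed on its own merits.

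Your ``only if'' direction is fine and, in fact, your exclusion of $h=6$ is exactly the argument the paper gives for its Theorem~\ref{Nikulin}: the class $L=\tfrac12(\widetilde H+v)$ restricts to a point of $V^r(C,M|_C)$ with $r\geq\lfloor h/2\rfloor$, and for $h=6$ this forces $\rho^-(6,r)\leq -1$, contradicting Welters' Theorem for a general Prym curve. The dimension count $11+h\geq 3h-3\Rightarrow h\leq 7$ is standard and correct.

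The ``if'' direction, however, is not a proof but a programme. Saying that you ``would compute $dm_h$'' and that certain vanishing ``should pin this kernel to the expected dimension'' leaves the essential work undone: you have not identified the cokernel of $dm_h$ with the kernel of any concrete map, nor verified the required injectivity for a single $(S,C)$ in any genus. For $h\leq 5$ Farkas--Verra proceed via explicit projective models, and for $h=7$ the argument is delicate and does not reduce to a routine Gaussian-map computation; your own remark that $\rho^-(7,3)=0$ only shows that the obstruction used for $h=6$ disappears, not that $m_7$ is dominant. As written, the proposal establishes only the easy implication and leaves the substantive content of \cite[Thm.~0.2]{FV}---dominance for $h\in\{2,3,4,5,7\}$---unproved.
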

We thus prove the following result.
\begin{thm}
\label{Nikulin}
Let $(S,M,H)$ be a genus $h$ primitively polarized Nikulin surface of standard type with either $h>7$, or $h=6$. For any smooth curve $C\in |H|$, the double cover $\widetilde C\in |\widetilde H|$ of $C$ defined by $M|_C$ does not satisfy Welter's Theorem. 
\end{thm}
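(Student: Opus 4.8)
The plan is to produce a single line bundle $L\in\mathrm{Nm}^{-1}(\omega_C)$ for which the Prym--Petri map $\mu_{0,L}^-$ is not injective; by \eqref{dimv} this is precisely the failure of Welters' Theorem for $\widetilde C$. The candidate comes from the ``half'' class on $\widetilde S$: let $B:=\tfrac12(\widetilde H+v)\in\Pic(\widetilde S)$ be as in Proposition \ref{standard}, with $v\in E_8(-2)$, $v^2=-4$ if $h$ is even and $v^2=-8$ if $h$ is odd, and set $L:=\mathcal O_{\widetilde S}(B)|_{\widetilde C}\in\mathrm{Pic}^{2h-2}(\widetilde C)$. Since $\iota^*$ fixes $\widetilde H$ and acts as $-1$ on $E_8(-2)$, one has $\iota^*B=\widetilde H-B$; as the involution induced by $\iota$ on $\widetilde C$ is the covering involution of $\pi|_{\widetilde C}$ and $\omega_{\widetilde C}=\widetilde H|_{\widetilde C}$ by adjunction, this gives $\iota^*L\cong\omega_{\widetilde C}\otimes L^\vee$. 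Using $v\cdot\widetilde H=0$ and $\widetilde H^2=\widetilde C^2=4h-4$ one computes $B\cdot\widetilde H=2h-2$ and $B^2=h-2$ ($h$ even), $B^2=h-3$ ($h$ odd); in particular $B$ and $\iota^*B$ are effective and nonzero (as $B\cdot\widetilde H>0$ and $\widetilde H$ is nef), so $h^0(\mathcal O_{\widetilde S}(-B))=h^0(\mathcal O_{\widetilde S}(B-\widetilde H))=0$.

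I would then show $L\in V^r(C,\eta)$ for $r:=h^0(\widetilde C,L)-1\ge 1+\tfrac{B^2}{2}$. For the first point, $\mathrm{Nm}(L)=\omega_C$: pick $s\in H^0(\widetilde S,\mathcal O_{\widetilde S}(B))$ restricting to a nonzero section of $L$ (possible since the restriction map is injective on $H^0$, its kernel being $H^0(\mathcal O_{\widetilde S}(B-\widetilde H))=0$); the product $(s\cdot\iota^*s)|_{\widetilde C}$ is an $\iota$-invariant section of $\omega_{\widetilde C}=\pi^*\omega_C$, hence the pull-back of some $t\in H^0(C,\omega_C)$, and comparing zero divisors gives $\mathrm{Nm}(L)=\mathcal O_C(\mathrm{div}\,t)=\omega_C$ (this also rules out the a priori possibility $\mathrm{Nm}(L)=\omega_C\otimes\eta$). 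For the second point, restricting the sequence $0\to\mathcal O_{\widetilde S}(B-\widetilde H)\to\mathcal O_{\widetilde S}(B)\to L\to 0$ and using the same vanishing yields $h^0(\widetilde C,L)\ge h^0(\mathcal O_{\widetilde S}(B))\ge\chi(\mathcal O_{\widetilde S}(B))=2+\tfrac{B^2}{2}$. The parity condition in the definition of $V^r(C,\eta)$ holds automatically for this value of $r$.

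To finish, since $\rho^-(h,r)=h-1-\binom{r+1}{2}$ is decreasing in $r$, we get $\rho^-(h,r)\le\rho^-\!\big(h,1+\tfrac{B^2}{2}\big)$, and a direct computation gives $\rho^-\!\big(h,\tfrac h2\big)=-\tfrac{(h-2)(h-4)}{8}$ for $h$ even and $\rho^-\!\big(h,\tfrac{h-1}{2}\big)=-\tfrac{(h-1)(h-7)}{8}$ for $h$ odd; both are strictly negative exactly when $h=6$ or $h>7$, matching the hypothesis. Since then $V^r(C,\eta)$ is nonempty (it contains $L$) while $\rho^-(h,r)<0$, \eqref{dimv} forces $\dim\ker\mu_{0,L}^-=\dim_LV^r(C,\eta)-\rho^-(h,r)>0$, so $\mu_{0,L}^-$ is not injective and $\widetilde C$ fails Welters' Theorem. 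The only genuinely delicate points are the identification $\mathrm{Nm}(L)=\omega_C$ (handled by the ``descent of $s\cdot\iota^*s$'' argument rather than a lattice computation) and, should the Picard number of $\widetilde S$ exceed $9$, checking that the class $B$ of Proposition \ref{standard} is still available on $\widetilde S$, which one settles by reducing to the minimal-Picard-number case since the rest of the argument is insensitive to it.
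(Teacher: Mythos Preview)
Your argument is correct and follows essentially the same route as the paper: restrict the ``half'' class $B=\tfrac12(\widetilde H+v)$ to $\widetilde C$, check that $L:=B|_{\widetilde C}$ lies in $\mathrm{Nm}^{-1}(\omega_C)$, bound $h^0(\widetilde C,L)$ below by $\chi(\mathcal O_{\widetilde S}(B))$, and compute that the resulting Prym--Brill--Noether number is negative precisely for $h=6$ or $h>7$. The only noteworthy differences are cosmetic: you justify $\mathrm{Nm}(L)=\omega_C$ (rather than $\omega_C\otimes\eta$) via the descent of the invariant section $s\cdot\iota^*s$, while the paper asserts this directly from $\iota^*A|_{\widetilde C}\simeq\omega_{\widetilde C}\otimes A|_{\widetilde C}^\vee$; and you obtain the bound $h^0(\widetilde C,L)\ge\chi(B)$ from the restriction sequence and effectivity of $B,\iota^*B$, whereas the paper quotes Saint-Donat for the vanishing $h^1(\widetilde S,A-\widetilde H)=0$ to get the sharper equality $h^0(\widetilde C,L)=h^0(\widetilde S,A)$.
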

\proof
Set $A:=\frac{\widetilde{H}+v}{2}\in \Pic(\widetilde S)$ with $v\in E_8(-2)$ as in  Proposition \ref{standard}. Since $\widetilde{C}\cdot v=0$, the restriction $A|_{\widetilde C}$ is a line bundle on $\widetilde{C}$ of degree $2h-2$ such that:
$$
r:= h^0(\widetilde C,A|_{\widetilde C})-1= h^0(\widetilde S,A)-1\geq \chi(A)-1=1+\frac{1}{2}\left(\frac{\widetilde{H}+v}{2}  \right)^2=\left\lfloor \frac{h}{2}  \right\rfloor;
$$
here, the first equality follows from the strong version of Bertini's Theorem due to Saint-Donat \cite{SD} yielding $h^1(\widetilde S,A(-\widetilde C))=0$, while in the last equality  we have used that $v^2=-4$ when $h$ is even and $v^2=-8$ otherwise. Since $E_8(-2)=(H^2(\widetilde S,\mathbb Z)^\iota)^\perp\subset \Pic(\widetilde S)$, one has $\iota^*\widetilde H=\widetilde H$ and $\iota^*v=-v$. This implies $\iota^*A|_ {\widetilde C}\simeq\omega_{\widetilde C} \otimes  A|_{\widetilde C}^\vee$, or equivalently, $\mathrm{Nm}(A|_ {\widetilde C})=\omega_C$. Hence, $A|_ {\widetilde C}$ defines an element of the Prym-Brill-Noether variety $V^{r}(C,M|_C)$. 
If $h$ is odd, an easy computation gives
$$\rho^-(h,r)=-\frac{(h-1)(h-7)}{8},$$
which is negative when $h>7$. Analogously, for even $h$ one computes the Prym-Brill-Noether number
$$\rho^-(h,r)=-\frac{(h-2)(h-4)}{8},$$
which is negative for $h\geq 6$.  Hence, the Prym-Petri map of $A|_{\widetilde C}$ cannot be injective if either $h>7$ or $h=6$, and this concludes the proof.
\endproof

\subsection{Non-standard Nikulin surfaces}
Let $(S, M, H)$ be a very general genus $h$ primitively polarized Nikulin surface of non-standard type. As remarked in \cite[Prop.~2.3 and Rmk.~2.4]{KLV1}, in this case the line bundles $R_1,R_2\in \Pic(S)$ prevent curves in $|H|$ from being Brill-Noether general; indeed, their restrictions to any smooth curve $C\in |H|$ define two theta-characteristics on $C$ with negative Brill-Noether number. Proposition 3.5 (2) in \cite{GS} yields the existence of a smooth irreducible curve in the linear systems $|R_i|$ for $i=1,2$ as soon as $c_1(R_i)^2\geq 0$. Let us pick a smooth curve $D$ lying in either $|R_1|,|R_2|$, and denote by $g\geq 1$ its genus. If $h \equiv 1\,\mathrm{mod}\, 4$, then $g=(h-1)/4$ and $(D,M|_C)\in \mathcal R_{g,4}$. If instead $h \equiv 3\,\mathrm{mod}\, 4$, then either $g=(h+1)/4$ and $(D,M|_C)\in \mathcal R_{g,2}$, or $g=(h-3)/4$ and $(D,M|_C)\in \mathcal R_{g,6}$, depending on whether $D$ lies in $|R_1|$ or $|R_2|$. In any case the double cover $\widetilde D\subset \widetilde S$ of $D$ defined by $M|_C$ has genus $(h+1)/2$.
\begin{prop}\label{utile}
Let $(S, M, H)$ be a general non-standard Nikulin surface of odd genus $h$ and let $D$ be a smooth curve in either $|R_1|$ or $|R_2|$. Then the following hold:
\begin{itemize}
\item[(i)] $D$ is always Brill-Noether general, and it is Petri general if it is general in its linear system;  
\item[(ii)] the ramified double cover $\widetilde D$ of $D$ defined by $M|_C$ is Brill-Noether general.
\end{itemize}
\end{prop}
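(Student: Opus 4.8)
The plan is to exploit the fact that $\widetilde D\subset \widetilde S$ lies on the $K3$ surface $\widetilde S$ with $\Pic(\widetilde S)=\mathbb Z\widetilde R\oplus E_8(-2)$ and to run Lazarsfeld--Mukai bundle techniques, exactly in the spirit of Lazarsfeld's proof of Gieseker--Petri. For part (i), the statement about $D$ itself: the surface $S$ has $\Pic(S)$ of rank $9$, and $D$ generates (together with the $N_i$ that it meets) a small sublattice; the key point is that the only effective classes $E$ with $0<E^2<c_1(R_i)^2$ and $E\cdot(R_i-E)$ small that could destroy Brill-Noether generality would have to come from the $R_j$ and $N_k$, and a direct lattice computation on $\Lambda_h$ shows no such destabilizing class exists for $D$ of the relevant genus. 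More precisely, I would apply the criterion (going back to Lazarsfeld, Pareschi, and Knutsen) that a curve on a $K3$ surface is Brill-Noether general provided its Clifford index is computed by the gonality and no special sublattice configuration intervenes; since $g(D)$ is small ($(h\pm1)/4$ or $(h-3)/4$) the verification reduces to a finite check. The Petri generality of a general member of $|R_i|$ then follows from Lazarsfeld's theorem, since a general curve in a base-point-free linear system on a $K3$ surface with Picard group avoiding the relevant sublattices is Petri general.

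For part (ii), which is the heart of the matter, I would transfer the problem to $\widetilde S$. Suppose $\widetilde D$ fails to be Brill-Noether general: then there is a line bundle $L$ on $\widetilde D$ with $\rho(g(\widetilde D),r,d)<0$ but $h^0(\widetilde D,L)\geq r+1$ and $h^1(\widetilde D,L)\geq \dots$. Associated to such an $L$ (or rather to a pencil refining it) is a Lazarsfeld--Mukai bundle $E_{\widetilde D,L}$ on $\widetilde S$, of rank $2$, with $c_1(E)=\widetilde R$ and $c_2(E)=d$, fitting in $0\to E^\vee\to H^0(L)\otimes\mathcal O_{\widetilde S}\to L\to 0$ (pushed forward to $\widetilde S$). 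The standard analysis of such bundles — computing $\chi(E\otimes E^\vee)$, using that $E$ is simple or decomposing it via the Bogomolov inequality — forces the existence of a sub-line-bundle $N\hookrightarrow E$ with $N$ and $E/N$ (modulo torsion) both effective, and the numerical constraints $N^2\geq 0$, $(c_1(E)-N)^2\geq 0$ together with $\rho<0$ pin $N$ down to be one of the classes in $\mathbb Z\widetilde R\oplus E_8(-2)$ of a very restricted type. The crucial input is that $E_8(-2)$ is \emph{negative definite and even}, so any class $v\in E_8(-2)$ has $v^2\leq -2$ (unless $v=0$), which makes it impossible to split $\widetilde R$ nontrivially into two effective classes of nonnegative square. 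This contradiction — the same mechanism that makes $\widetilde R$ behave like a genuinely primitive polarization on a Picard-rank-one $K3$ — shows no destabilizing $L$ can exist, hence $\widetilde D$ is Brill-Noether general.

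The main obstacle, and the step I would spend the most care on, is the \emph{sublattice bookkeeping in the reducible case}: the Lazarsfeld--Mukai bundle $E$ need not be $\mu$-stable, and when it is not one must rule out, class by class, every way that $c_1(E)=\widetilde R$ could decompose as $N + (c_1(E)-N)$ with both summands effective and of nonnegative self-intersection inside $\mathbb Z\widetilde R\oplus E_8(-2)$. A secondary subtlety is that $\widetilde D$ might not be a \emph{general} member of $|\widetilde R|$ — it is a very specific curve coming from $\pi^{-1}(D)$ via \eqref{pasqua} — so one cannot simply quote genericity; instead one must check that $\widetilde D$ is base-point-free and that $\widetilde R$ is nef and primitive, which follows from Proposition \ref{non-standard} and the description \eqref{pasqua}, and that $\widetilde D$ is smooth and irreducible, which is part of the setup. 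Once these are in place, the negative-definiteness of $E_8(-2)$ does all the real work, and the argument closes.
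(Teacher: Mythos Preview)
Your approach is correct and rests on exactly the same lattice fact the paper uses: neither $R_i$ on $S$ nor $\widetilde R$ on $\widetilde S$ can be written as $L_1\otimes L_2$ with $h^0(L_i)\geq 2$, ultimately because the orthogonal complement of the polarization is negative definite. The paper's proof, however, is a few lines long: rather than redoing the Lazarsfeld--Mukai bundle analysis by hand, it simply verifies this non-decomposability hypothesis and invokes Lazarsfeld's theorem \cite{Lazarsfeld,Pareschi} as a black box. Two points where you are working harder than necessary: first, your rank-$2$ bundle argument only literally treats pencils, whereas citing Lazarsfeld directly handles all $(r,d)$ at once; second, and more importantly, your concern that $\widetilde D$ may not be general in $|\widetilde R|$ is moot for (ii), since Lazarsfeld's theorem gives Brill--Noether generality for \emph{every} smooth curve in the linear system, not just the general one. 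That genericity issue is precisely why the paper phrases (i) in two tiers (Brill--Noether for all $D$, Petri only for general $D$) and why it claims only Brill--Noether generality, not Petri generality, for $\widetilde D$ in (ii).
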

\begin{proof}
It is easy to verify that neither $R_1$ nor $R_2$ can be decomposed as the tensor product of two line bundles on $S$ both satisfying $h^0\geq 2$, and thus (i) follows proceeding as in Lazarsfeld's proof of Petri's Theorem \cite{Lazarsfeld, Pareschi}. Proposition \ref{non-standard} yields $\widetilde D\in |\widetilde R|$, where $\widetilde R$ is a generator of $\Pic(\widetilde S)$. Again one can easily exclude the existence of $L_1,L_2\in \Pic(\widetilde S)$ with $\widetilde R\simeq L_1\otimes L_2$ and $h^0(\widetilde S,L_i)\geq 2$. Hence, Lazarsfelds's Theorem implies that all smooth curves in the linear system $|\widetilde R|$ (thus, in particular, $\widetilde D$) are Brill-Noether general.
\end{proof}
A general curve $\widetilde D$ as in the above statement is highly expected to be Petri general, as well. However, this does not follow directly from Lazarsfeld's Theorem. Indeed, the latter only implies Petri generality for general curves in $|\widetilde R|$; however, a double cover $\widetilde D$ of a curve $D$ in $|R_1|$ or $|R_2|$ is never general in  the linear system $|\widetilde R|$, as it lies in either the $\iota$-invariant or the $\iota$-anti-invariant part of it.

By varying $h$, one obtains all possible values for the genus $g$ of $D$. Recalling that the moduli space $ \mathcal R_{g,2n}$ is irreducible for every $g\geq 2$ and $n\geq 0$  (cf. \cite[Prop. 2.5]{bud}), Proposition \ref{utile} then implies the following generalization of Bud's result.
\begin{thm}
Fix integers $g\geq 2$ and $n=1,2,3$. Let $(D,x_1,\ldots,x_{2n},\eta)\in \mathcal R_{g,2n}$ be general and let $\widetilde D$ the double cover of $D$ defined by $\eta$. Then the curve $\widetilde D$ is Brill-Noether general.
\end{thm}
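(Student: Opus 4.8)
The plan is to reduce the statement to Proposition~\ref{utile}(ii) via the standard ``open plus non-empty plus irreducible'' specialization argument. First I would check that every pair $(g,n)$ with $g\geq 2$ and $n\in\{1,2,3\}$ can be realized by a curve lying on a non-standard Nikulin surface. Given $g\geq 2$, set $h:=4g-1$ if $n=1$, $h:=4g+1$ if $n=2$, and $h:=4g+3$ if $n=3$. In each case $h\geq 7$ is odd, and $h\equiv 3\pmod 4$ when $n\in\{1,3\}$ while $h\equiv 1\pmod 4$ when $n=2$; hence a general genus $h$ primitively polarized Nikulin surface $(S,M,H)$ of non-standard type and Picard number $9$ exists and carries the line bundles $R_1,R_2$ of the Garbagnati--Sarti classification \cite{GS}. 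For this value of $h$ the relevant $R_i$ --- namely $R_1$ when $n=1$, $R_2$ when $n=3$, and either of $R_1,R_2$ when $n=2$ --- has genus $g(R_i)=g$, and since $R_i^2=2g-2\geq 0$ the linear system $|R_i|$ contains a smooth irreducible curve $D$ of genus $g$ by \cite[Prop.~3.5(2)]{GS}. As recalled in Section~\ref{nik}, the double cover $\widetilde D\to D$ induced by the Nikulin cover is ramified at exactly $2n$ points, so $(D,x_1,\dots,x_{2n},M|_D)$ determines a point of $\mathcal R_{g,2n}$, with associated double cover the curve $\widetilde D\subset\widetilde S$ of genus $2g+n-1$.

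By Proposition~\ref{utile}(ii), for this $D$ the curve $\widetilde D$ is Brill--Noether general; consequently the locus
\[
\mathcal B_{g,2n}:=\{\,(D,x_1,\dots,x_{2n},\eta)\in\mathcal R_{g,2n}\ :\ \widetilde D\ \text{is Brill--Noether general}\,\}
\]
is non-empty. Next I would show that $\mathcal B_{g,2n}$ is open. The double covers $\widetilde D$ fit into a flat family of smooth curves of constant genus $2g+n-1$ over $\mathcal R_{g,2n}$, so the relative Brill--Noether loci $W^r_d$ are proper over the base. Brill--Noether generality of a fibre is a finite conjunction of conditions: emptiness of $W^r_d(\widetilde D)$ whenever $\rho(2g+n-1,r,d)<0$, and $\dim W^r_d(\widetilde D)\leq \rho(2g+n-1,r,d)$ whenever $\rho\geq 0$ --- the reverse inequality holding automatically for every smooth curve by the existence part of the Brill--Noether Theorem. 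Each of these is an open condition on $\mathcal R_{g,2n}$ by properness together with upper semicontinuity of fibre dimension, so $\mathcal B_{g,2n}$ is open.

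Finally, $\mathcal R_{g,2n}$ is irreducible for all $g\geq 2$ and $n\geq 0$ by \cite[Prop.~2.5]{bud}, so the non-empty open subset $\mathcal B_{g,2n}$ is dense; in particular a general $(D,x_1,\dots,x_{2n},\eta)\in\mathcal R_{g,2n}$ lies in $\mathcal B_{g,2n}$, which is precisely the assertion.

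I do not expect a serious obstacle: all the geometric content is already contained in Proposition~\ref{utile}(ii), which rests on Lazarsfeld's theorem together with the fact that $R_1$, $R_2$, and the generator $\widetilde R$ of $\Pic(\widetilde S)$ cannot be written as a tensor product of two line bundles each with at least two global sections. The only step requiring care is the bookkeeping in the first paragraph: matching each $(g,n)$ with an admissible genus $h$ in the correct residue class modulo $4$ so that the chosen $R_i$ has genus exactly $g$, non-negative self-intersection, and induces a cover ramified at $2n$ points, and recalling that non-standard Nikulin surfaces of that genus and Picard number $9$ genuinely exist.
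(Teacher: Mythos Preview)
Your proposal is correct and follows exactly the paper's own approach: invoke Proposition~\ref{utile}(ii) to exhibit a point of $\mathcal R_{g,2n}$ whose double cover is Brill--Noether general, then conclude by irreducibility of $\mathcal R_{g,2n}$ via \cite[Prop.~2.5]{bud}. The paper's proof is terser---it simply says ``by varying $h$, one obtains all possible values for the genus $g$ of $D$'' and leaves the openness argument implicit---whereas you spell out the explicit choice of $h$ for each $(g,n)$ and the semicontinuity step, but the substance is identical.
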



\begin{thebibliography}{[HHHI]}

\bibitem[AF]{generalcovers} M.~Aprodu, G.~Farkas, \emph{Green's conjecture for general covers}, Contemp. Math. {\bf 564} (2012), 211--226.

\bibitem[ACGH]{Arbarello} E.~Arbarello, M.~Cornalba, P.~A.~Griffiths, J.~Harris, {\em Geometry of algebraic curves}, Grundl. Math. Wiss. 267, Springer Verlag, 1985.

\bibitem[At]{Atiyah} M.~Atiyah, \emph{Riemann surfaces and spin structures}, Ann. Sci. \'Ec. Norm. Sup\'er. {\bf 4} (1971), 47--62.

\bibitem[Bea]{Beauville} A.~Beauville, \emph{Vanishing thetanulls on curves with involutions}, Rend. Circ. Mat. Palermo {\bf 62} (2013), 61--66.

\bibitem[Ber]{Bertram} A.~Bertram, \emph{ An existence theorem for Prym special divisors}, Invent. Math. {\bf 90} (1987), 669--671.

\bibitem[Bu]{bud} A.~Bud, \emph{The birational geometry of $\overline{\mathcal{R}_{g,2}}$ and Prym-canonical divisorial strata}, math.AG/01718.v2.

%\bibitem{C} G. CASTELNUOVO, \emph{Numero delle involutione rationali glancenti sopra una curva di dato genere}, Rend. della R. Acad. Lincei, ser. 4,5 (1889).

%\bibitem{Deligne} P. DELIGNE, D. MUMFORD. \emph{The irreducibility of the space of curves of a given genus.} Publ. Math., Inst. Hautes Ãtud. Sci., 36:75-109, 1969.



\bibitem[EH1]{EH} D.~Eisenbud, J.~Harris, \emph{Divisors on general curves and cuspidal rational curves}, Invent. Math. {\bf 74} (1983), 371--418.

\bibitem[EH2]{Eisenbud} D.~Eisenbud, J.~Harris, \emph{A simpler proof of the Gieseker-Petri Theorem on special divisors}, Invent. Math. {\bf 74} (1983), 269--280.

%\bibitem{smooth} D.~Eisenbud, J.~Harris, \emph{Limit linear series: basic theory}, Invent. Math. {\bf 85} (1986), 337--371.

%\bibitem{ELMS} D. EISENBUD, H. LANGE, G. MARTENS, F.-O. SCHREYER, \emph{The Clifford dimension of a projective curve},
%Comp. Math. 72 (1989), 173-204.

\bibitem[F1]{genus23} G.~Farkas, \emph{The geometry of the moduli space of curves of genus 23}, Math. Ann. {\bf 318} (2000), 43--65.

\bibitem[F2]{theta} G.~Farkas, \emph{Gaussian maps, Gieseker-Petri loci and large theta-characteristics}, J. Reine Angew. Math. {\bf 581} (2005), 151--173.

\bibitem[F3]{gabi} G.~Farkas, \emph{Prym varieties and their moduli}, in Contributions to Algebraic Geometry, EMS Ser. Congr. Rep. (Eur. Math. Soc., Z\"urich) (2012), 215--255.

%\bibitem{proofG} W. FULTON, R. LAZARSFELD, \emph{On the connectedness of degeneracy loci and special divisors.} Acta Math. 146, 1981, pp. 271-283.

%\bibitem{FV} G.~Farkas, A.~Verra, \emph{Moduli of theta-characteristics via Nikulin surfaces}, Math Ann 354, 465-496 (2012).

\bibitem[FK]{FK} G.~Farkas, M.~Kemeny, {\em The generic Green-Lazarsfeld Secant Conjecture}, Invent. Math. {\bf 203} (2016), 265--301.

\bibitem[FV1]{FV1} G.~Farkas, A.~Verra, \emph{Moduli of theta characteristics via Nikulin surfaces} , Math. Ann. {\bf 354} (2012), 465--496.

\bibitem[FV2]{FV} G.~Farkas, A.~Verra, \emph{Prym varieties and moduli of polarized Nikulin surfaces}, Adv. Mat. {\bf 290}  (2016), 314--328.

\bibitem[GS]{GS} A.~Garbagnati, A.~Sarti, \emph{Projective models of K3 surfaces with an even set}, Adv. Geom. {\bf 8} (2008), 413--440.

\bibitem[Gi]{Gi} D.~Gieseker, \emph{Stable curves and special divisors: Petri's conjecture}, Invent. Math. {\bf 66} (1982), 251--275.

%\bibitem{special divisors} M. GREEN, R. LAZARSFELD, \emph{Special divisors on curves on a K3 surface}, Invent. Math.89, 357-370 (1987).

\bibitem[Gr]{griffiths} P.~A.~Griffiths, J.~Harris, \emph{On the variety of special linear systems on an algebraic curve}, Duke Math. J. {\bf 47} (1980), 233--272.


\bibitem[Ha]{Vr2} J.~Harris, \emph{Theta-characteristics on algebraic curves}, Trans. Amer. Math. Soc. {\bf 271} (1982), 611--638.

%\bibitem{lls} J. HARRIS and I. MORRISON, \emph{Moduli of curves},  Springer-Verlag, 1998

%\bibitem{Hartshorne} R. HARTSHORNE, \emph{Algebraic Geometry}, Springer-Verlag, 1977.

\bibitem[Ke]{Ke} G.~Kempf, {\em Schubert methods with an application to algebraic curves}, Publ. Math. Centrum, University of Amsterdam, 1971.

\bibitem[KL]{KL} S.~Kleiman, D.~Laksov, \emph{On the existence of special divisors}, Am. J. Math. {\bf 94} (1972), 431--436.

%\bibitem{r=1} A. KNUTSEN, \emph{On two conjectures for curves on K3 surfaces}, International Journal of Mathematics Vol. 20, No. 12, pp. 1547-1560 (2009).

%\bibitem{LKV} A. KNUTSEN, M. LELLI-CHIESA, A. VERRA, \emph{Half Nikulin surfaces and moduli of Prym curves}, Journal of the Institute of Mathematics of Jussieu, (2021), 20(5), 1547-1584.

%\bibitem{LKV1} A. KNUTSEN, M. LELLI-CHIESA, A. VERRA, \emph{Moduli of non-stadard Nikulin surfaces in low genus}, Annali della Scuola Normale Superiore di Pisa XXI (2020), 361-384.

\bibitem[KLV1]{KLV1} A.~L.~Knutsen, M.~Lelli-Chiesa, A.~Verra, {\it Half Nikulin surfaces and moduli of Prym curves}, J. Inst. Math. Jussieu {\bf 20} (2021), 1547--1584. 

\bibitem[KLV2]{KLV2} A.~L.~Knutsen, M.~Lelli-Chiesa, A.~Verra, {\it Moduli of non-standard Nikulin surfaces in low genus}, Ann. Sc. Norm. Super. Pisa Cl. Sci. {\bf XXI} (2020), 361--384. 



\bibitem[La]{Lazarsfeld} R.~Lazarsfeld, \emph{Brill-Noether-Petri without degenerations.}, J. Diff. Geom. {\bf 23} (1986), 299--307.

%\bibitem{Miranda} R.~Miranda, \emph{The basic theory of elliptic surfaces}, ETS Editrice, Pisa, 1989.

%\bibitem{vanishing} D. MUMFORD, \emph{Pathologies. III}, American Journal of Mathematics, The Johns Hopkins University Press 89 (1), 1967, 94--104.

\bibitem[M1]{Mumford} D.~Mumford, \emph{Prym varieties I}, Contributions to Analysis, Academic Press, New York (1974), 325--350.


\bibitem[M2]{Vr1}D.~Mumford, \emph{Theta Characteristics of an Algebraic Curve}, Ann. scient. \'Ec. Norm. Sup. {\bf 4} (1971), 181--192.

%\bibitem{Mg} D. MUMFORD, J. FOGARTY, and F. KIRWAN. \emph{Geometric invariant theory.} 3rd enl. ed. Berlin: Springer-Verlag, 1993.

\bibitem[Pa]{Pareschi} G.~Pareschi, \emph{A proof of Lazarsfeld's Theorem on curves on K3 surfaces}, J. Alg. Geom. {\bf 4} (1995), 195--200.

%\bibitem{Riemann} B. RIEMANN, \emph{Theorie der Abelâschen Functionen}, J. Reine Angew. Math.,54:115â155, 1857.

\bibitem[SD]{SD} B.~Saint-Donat, {\em Projective models of $K3$ surfaces}, Amer. J. Math. {\bf 96} (1974), 602--639.

 \bibitem[SJ]{SJ} F.~Schottky, H.~Jung, {\em Neue S\"atze \"uber Symmetralfunktionen und die Abelschen Funktionen der Riemannschen Theorie}, S.-B. Preuss. Akad. Wiss. Berlin, Phys. Math. Kl. {\bf 1} (1909), 282--287.

\bibitem[Sc]{Schwarz} I.~Schwarz {\em Brill-Noether theory for cyclic covers}, J. Pure Appl. Algebra {\bf 221} (2017), 2420--2430.


%\bibitem{cyclic covers} I. SCHWARZ, \emph{Brill-Noether theory for cyclic covers}, Journal of Pure and Applied Algebra, 221, 2409-2646, (2017).

%\bibitem{Teichmuller} O. TEICHM\"{U}LLER,\textit{Gesammelte Abhandlungen}, Collected papers. Hrsg. von L. V. Ahlfors und F. W. Gehring. Berlin, New York: Springer- Verlag, 1982.

\bibitem[VGS]{VGS} B.~Van.~Geemen, A.~Sarti, \emph{Nikulin involutions on K3 surfaces}, Math. Zeit. {\bf 255} (2007), 731--753.

\bibitem[We]{Welters} G.~E.~Welters, \emph{A theorem of Gieseker-Petri type for Prym varieties} Ann. Sci. \'Ec. Norm. Sup\'er, {\bf 4} (1985), 671--683.

\bibitem[Wi]{Wirtinger} W.~Wirtinger, \emph{Untersuchungen fiber thetafunktionen}, Teubner, Berlin, 1895.





\end{thebibliography}
\end{document}